\newtheorem{theorem}{Theorem}[section]
\newtheorem{conjecture}[theorem]{Conjecture}
\newtheorem{corollary}[theorem]{Corollary}
\newtheorem{lemma}[theorem]{Lemma}
\newtheorem{problem}[theorem]{Problem}
\begin{document}

\title{Interplay between the local metric dimension and the clique number of a graph}

\author{
Ali Ghalavand$^{a,}$\thanks{Corresponding author;  email:\texttt{alighalavand@nankai.edu.cn}}
\and
Sandi Klav\v zar $^{b,c,d}$
\and 
Xueliang Li$^{a}$
}

\maketitle

\begin{center}
$^a$ Center for Combinatorics, Nankai University, Tianjin 300071, China \\
\medskip

$^b$ Faculty of Mathematics and Physics,  University of Ljubljana, Slovenia\\
\medskip

$^b$ Institute of Mathematics, Physics and Mechanics, Ljubljana, Slovenia\\
\medskip

$^d$ Faculty of Natural Sciences and Mathematics,  University of Maribor, Slovenia\\
\end{center}

\begin{abstract}
The local metric dimension ${\rm dim}_l$ in relation to the clique number $\omega$ is investigated. It is proved that if $\omega(G)\leq n(G)-3$, then ${\rm dim}_l(G) \leq n(G)-3$ and the graphs attaining the bound classified. Moreover, the graphs $G$ with ${\rm dim}_l(G) = n(G)-3$ are listed (with no condition on the clique number). It is proved that if $\omega(G)=n(G)-2$, then $n(G)-4 \leq {\rm dim}_l(G)\leq n(G)-3$, and all graphs are divided into two groups depending on which of the options applies. The conjecture asserting that for any graph $G$ we have ${\rm dim}_l(G) \leq \left[(\omega(G)-2)/(\omega(G)-1)\right] \cdot n(G)$ is proved for all graphs with $\omega(G)\in\{n(G)-1,n(G)-2,n(G)-3\}$. A negative answer is given for the problem whether every planar graph fulfills the inequality ${\rm dim}_l(G) \leq \lceil (n(G)+1)/2 \rceil$. 
\end{abstract}

\noindent
\textbf{Keywords:} metric dimension; local metric dimension; clique number; planar graph

\medskip\noindent
\textbf{AMS Math.\ Subj.\ Class.\ (2020)}: 05C12, 05C69

\section{Introduction}

This paper focuses on studying the local metric dimension of finite, simple, and connected graphs. First, let's introduce this concept.

If $x$ and $y$ are vertices of a graph $G = (V(G), E(G))$, then the distance $d_G(x, y)$ between $x$ and $y$ in $G$ is the number of edges on a shortest $x,y$-path. Vertices $u$ and $v$ of $G$ are {\em distinguished} by $w$, or equivalently, $w$ {\em distinguishes} $u$ and $v$, if $d_G(u,w)\neq d_G(v,w)$. A subset $W\subseteq V(G)$ is a {\em resolving set} for $G$ if for any $u, v\in V(G)-W$, there is a vertex in $W$ that distinguishes $u$ and $v$. $W$ is a {\em local resolving set} if for any adjacent $u, v\in V(G)-W$, there is a vertex in $W$ that distinguishes $u$ and $v$. The {\em metric dimension} $\dim(G)$  and the {\em local metric dimension} $\dim_l(G)$ of $G$ are the cardinalities of smallest resolving sets and smallest local resolving sets for $G$, respectively. Clearly, $\dim_l(G)\leq \dim(G)$.

The concept of the metric dimension of graphs has a rich history dating back to its initial definition by Harary \& Melter~\cite{13} and Slater \cite{25}. Determining the metric dimension is NP-complete in general graphs~\cite{19} as well as restricted to planar graphs with a maximum degree 6~\cite{6}. Research on this dimension is numerous, partly due to the fact that it found various applications in real-world problems such as robot navigation, image processing, privacy in social networks, and locating intruders in networks. The 2023 overview~\cite{survey2} of the essential results and applications of metric dimension contains well over 200 references. 

Alongside research on the metric dimension, there have also been various reasons for considering variations of this concept. The survey~\cite{survey1} which focuses on variants of metric dimension also cites over 200 papers. One of the most interesting variations is the local metric dimension introduced in 2010 by, Okamoto, Phinezy, and Zhang~\cite{Okamoto1}. This dimension is also computationally hard~\cite{9,10} and has been investigated in a series of papers, let us point to the following selection of them~\cite{Abrishami1, 3, 4, Ghalavand1, lal-2023, 17, yang-2024}, as well as to the recent paper on the fractional local metric dimension~\cite{javaid-2024}. 

Denote by $\omega(G)$ the clique number of $G$, and by $n(G)$ the order of $G$. In the seminal paper on the local metric dimension, the following results were proved. 

\begin{theorem}\label{cth0}
{\rm \cite[Theorems 2.4, 2.5]{Okamoto1}} If $G$ is a connected graph with $n(G) \ge 3$, then the following hold. 
\begin{enumerate}
  \item[\rm (I)] $\dim_l(G)=n-1$ if and only if $G\cong\,K_n$.
  \item [\rm(II)] $\dim_l(G)=n-2$ if and only if $\omega(G)=n-1$.
  \item [\rm(III)] $\dim_l(G)=1$ if and only if  $G$ is bipartite.
\end{enumerate}
\end{theorem}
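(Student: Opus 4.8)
The plan is to handle the three equivalences in turn, relying on two elementary observations. First, if $uv\in E(G)$ then $|d_G(u,w)-d_G(v,w)|\le 1$ for every $w$, so a vertex $w\notin\{u,v\}$ distinguishes $u$ and $v$ precisely when $d_G(u,w)\ne d_G(v,w)$, and in particular $w$ fails to distinguish them whenever $w$ is adjacent to both. Second, enlarging a local resolving set keeps it local resolving, and any set whose complement is an independent set is a local resolving set; consequently $\dim_l(G)\le n-j$ if and only if $G$ has a $j$-element set $T$ such that every edge of $G[T]$ is distinguished by some vertex of $V(G)\setminus T$. I will refer to this equivalence as the \emph{reduction}.

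For part (I): if $G\cong K_n$, then within the $n$-clique no vertex distinguishes two others, so the complement of any local resolving set has at most one vertex, whence $\dim_l(G)\ge n-1$; equality holds since a set of $n-1$ vertices is trivially local resolving. Conversely, if $G$ is not complete, a non-adjacent pair is independent, so its complement is local resolving and $\dim_l(G)\le n-2$. For part (III): if $G$ is bipartite, then for any fixed $w$ the endpoints of an edge lie in different parts and hence are at distances of different parity from $w$, so $\{w\}$ is local resolving; since $G$ has an edge, $\dim_l(G)=1$. Conversely, if $\{w\}$ is local resolving then for every edge $uv$ the values $d_G(u,w)$ and $d_G(v,w)$ differ (by exactly one), also when one endpoint is $w$; thus $v\mapsto d_G(w,v)\bmod 2$ is a proper $2$-colouring and $G$ is bipartite.

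For the ``if'' part of (II), suppose $\omega(G)=n-1$, let $Q$ be an $(n-1)$-clique and let $x$ be the remaining vertex. Since $G\not\cong K_n$, part (I) gives $\dim_l(G)\le n-2$. For the reverse bound, by the reduction it suffices to show that $V(G)\setminus T$ is not local resolving for any $3$-set $T$. At least two vertices of $T$ lie in $Q$. If all three do, then $x\notin T$ and $x$ is the only vertex of $V(G)\setminus T$ outside $Q$; as $G$ is connected, $x$ has a neighbour in $Q$ and hence $d_G(x,q)\le 2$ for all $q\in Q$, so among the three clique-vertices of $T$ some two, being both neighbours or both non-neighbours of $x$, are equidistant from $x$, and since no vertex of $Q\setminus T$ distinguishes them either, the edge joining them is unresolved. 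If exactly two vertices $u,v$ of $T$ lie in $Q$, then $x\in T$, so $V(G)\setminus T\subseteq Q$ and again the edge $uv$ is unresolved. Hence $\dim_l(G)=n-2$.

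For the ``only if'' part of (II), part (I) gives $\omega(G)\le n-1$, and it remains to prove: \textbf{if $\omega(G)\le n-2$ then $\dim_l(G)\le n-3$}, which by the reduction amounts to producing a $3$-set all of whose induced edges are distinguished from outside. Since $G$ is connected and not complete, some edge $ab$ has $N[a]\ne N[b]$ (otherwise $N[u]=N[v]$ for all $u,v$, propagating along edges of the connected graph, forcing $G\cong K_n$), so there is $z\notin\{a,b\}$ adjacent to exactly one of $a,b$, say $z\in N(a)\setminus N(b)$; this $z$ distinguishes $a$ and $b$ from outside every triple avoiding $z$. If some $c\notin\{a,b,z\}$ is adjacent to neither $a$ nor $b$, then $T=\{a,b,c\}$ works, its only edge $ab$ being distinguished by $z$. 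Otherwise $N[a]\cup N[b]=V(G)$; if moreover $a$ has a non-neighbour $\alpha$ and $b$ a non-neighbour $\beta$, then $\alpha\sim b$, $\beta\sim a$ and $\alpha\ne\beta$, and one checks that $b$ distinguishes both edges present in $T=\{a,\alpha,\beta\}$. The remaining possibility, where (say) $a$ is a universal vertex, is the main obstacle: then $G$ has diameter at most $2$, the vertex $a$ is useless for distinguishing anything, and one must use $\omega(G)\le n-2$ together with $\alpha(G)\le 2$ (the latter because an independent triple would finish immediately) to locate a suitable triple inside $G-a$, arguing via non-neighbourhoods and ruling out true-twin obstructions. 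I expect this universal-vertex analysis, together with checking the smallest orders directly, to be the technically heaviest part of the argument.
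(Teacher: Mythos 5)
This theorem is quoted by the paper from Okamoto--Phinezy--Zhang and is not proved there, so there is no in-paper argument to compare against; your proposal has to stand on its own. Parts (I) and (III), the ``if'' direction of (II), and your reduction of $\dim_l(G)\le n-j$ to finding a $j$-set $T$ whose induced edges are all distinguished from outside $T$ are all correct and complete.

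The genuine gap is the one you flag yourself: in the ``only if'' direction of (II) you must show that $\omega(G)\le n-2$ forces $\dim_l(G)\le n-3$, and the case where $G$ has a universal vertex $a$ is left as an announced ``main obstacle'' rather than an argument. As written, the proof is therefore incomplete. The good news is that this case closes in a few lines along the lines you gesture at. Assume $\alpha(G)\le 2$ (otherwise an independent triple finishes the job) and let $H=G-a$; then $\overline{H}$ is triangle-free, $H$ is not complete, and $\omega(H)=\omega(G)-1\le n-3$ gives $\alpha(\overline{H})\le |V(H)|-2$. If $\overline{H}$ has an edge $uv$ with both endpoints of degree at least $2$ in $\overline{H}$, take $T=\{a,u,v\}$: its only induced edges are $au$ and $av$, and a second $\overline{H}$-neighbour of $u$ (resp.\ of $v$) lies outside $T$, is at $G$-distance $2$ from $u$ (resp.\ $v$) and at distance $1$ from $a$, so both edges are distinguished. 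Otherwise every edge of $\overline{H}$ has a degree-one endpoint, so each nontrivial component of $\overline{H}$ is a star; the bound $\alpha(\overline{H})\le |V(H)|-2$ forces at least two star components, and picking edges $u_1v_1$, $u_2v_2$ of $\overline{H}$ from two different components, the set $T=\{u_1,v_1,u_2\}$ works because $v_2\notin T$ is $G$-adjacent to $u_1$ and $v_1$ but not to $u_2$, hence distinguishes both induced edges $u_1u_2$ and $v_1u_2$. With this paragraph inserted, your proof is complete.
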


\begin{theorem}\label{thm:Okamoto-2}
{\rm \cite[Theorems 3.1, 3.2]{Okamoto1}} If $G$ is a connected graph, 
$$\dim_l(G) \ge \max\left \{ \lceil \log_2 \omega(G)\rceil, n(G) - 2^{n(G) - \omega(G)}\right\}\,.$$
\end{theorem}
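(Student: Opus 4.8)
The plan is to derive both parts of the lower bound from a single structural observation about how a smallest local resolving set $W$ must meet a maximum clique $C$ of $G$. Writing $a=|W\cap C|$ and $b=|W\setminus C|$, so that $\dim_l(G)=a+b$, the key inequality I would aim to prove is
$$\omega(G)-a\ \le\ 2^{\,b}\,.$$
Granting this, the two claimed bounds fall out from elementary estimates, as sketched below.

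For the key inequality: the $\omega(G)-a$ vertices of $C\setminus W$ are pairwise adjacent, so $W$ must distinguish every pair of them. I would first note that no vertex $w\in W\cap C$ helps here, since $d_G(v,w)=1$ for every $v\in C\setminus W$ (distinct from, and adjacent to, $w$); hence all the distinguishing must be carried out by the $b$ vertices of $W\setminus C$. The crucial point is then that, by the triangle inequality, for a fixed $w\in W\setminus C$ the distances $d_G(v,w)$ with $v\in C$ take only the two consecutive values $k_w$ and $k_w+1$, where $k_w=\min_{c\in C}d_G(c,w)$ — because any two vertices of $C$ are at distance $1$. Consequently $v\mapsto\big(d_G(v,w)\big)_{w\in W\setminus C}$ maps $C\setminus W$ into a set of size at most $2^{b}$, and two vertices with equal image would be distinguished by no vertex of $W$ at all; since $W$ is a local resolving set this map is injective, which is exactly the inequality. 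I expect this ``binary coding'' step to be the main obstacle, i.e.\ the part requiring the most care (including the degenerate cases $C\subseteq W$ or $|C\setminus W|\le 1$, where the inequality holds trivially because $2^{b}\ge1$); everything after it is bookkeeping.

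Finally I would extract the two bounds. From $\omega\le a+2^{b}$ together with $a+2^{b}\le 2^{a+b}$ (which holds for non-negative integers since $a\le 2^{a}-1$ and $2^{b}\ge1$) one gets $\omega\le 2^{\dim_l(G)}$, hence $\dim_l(G)\ge\lceil\log_2\omega\rceil$. For the other bound, put $k=n(G)-\omega(G)=|V(G)\setminus C|$, so that $b\le k$; then $a\ge\omega-2^{b}$ gives $\dim_l(G)=a+b\ge\omega-(2^{b}-b)$, and since $b\mapsto 2^{b}-b$ is non-decreasing on $\{0,1,2,\dots\}$ we conclude $\dim_l(G)\ge\omega-(2^{k}-k)=n(G)-2^{\,n(G)-\omega(G)}$. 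Taking the maximum of the two estimates finishes the proof.
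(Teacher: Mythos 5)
The paper does not prove this statement; it is quoted verbatim from Okamoto--Phinezy--Zhang \cite[Theorems 3.1, 3.2]{Okamoto1}, so there is no in-paper proof to compare against. Your argument is correct and self-contained. The key inequality $\omega(G)-a\le 2^{b}$ is justified properly: vertices of $W\cap C$ are at distance exactly $1$ from every vertex of $C\setminus W$ and hence distinguish no pair there, while for $w\in W\setminus C$ the distances from $w$ to the (pairwise adjacent) vertices of $C$ take at most two consecutive values, so the distance vectors of $C\setminus W$ over $W\setminus C$ live in a set of size $2^{b}$ and must be pairwise distinct. The two arithmetic deductions also check out: $a+2^{b}\le 2^{a+b}$ follows from $(2^{a}-1)(2^{b}-1)\ge 0$, and $b\mapsto 2^{b}-b$ is non-decreasing on the non-negative integers since $2^{b}-1\ge 0$. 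This is essentially the same ``binary coding of clique vertices'' idea as in the original source, but you package both lower bounds as corollaries of the single inequality $\omega-a\le 2^{b}$, which is a slightly cleaner unification than proving the two bounds separately.
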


The local metric dimension and the clique number of a graph are therefore strongly intertwined. This was the primary motivation for our present paper in which we extend the above two theorems as follows. 
\begin{align}
\omega(G) = n(G) & \Leftrightarrow \dim_l(G) = n(G)-1\,.\label{eq:1} \\
\omega(G) = n(G)-1 & \Leftrightarrow \dim_l(G) = n(G)-2\,.\label{eq:2} \\
\omega(G) = n(G)-2 & \Rightarrow n(G)-4 \le \dim_l(G) \le  n(G)-3\,.\label{eq:3} \\
\omega(G) = n(G)-3 & \Rightarrow n(G)-8 \le \dim_l(G) \le  n(G)-3\,.\label{eq:4}
\end{align}
These results are elaborated in Section~\ref{sec:omega}. We prove the upper bound of~\eqref{eq:4} in Theorem~\ref{th1} for the more general case when $\omega(G) \le n(G)-3$ holds, and also characterize the graphs which attain the bound. In Theorem~\ref{th2} we then characterize the graphs $G$ for which $\dim_l(G) = n(G)-3$. Applying the above theorems we then deduce the bounds in~\eqref{eq:3}. In Section~\ref{sec:conjectures} we first use the results of Section~\ref{sec:omega} to demonstrate that the conjecture asserting that for any graph $G$ we have ${\rm dim}_l(G) \leq \left[(\omega(G)-2)/(\omega(G)-1)\right] \cdot n(G)$ holds true for all graphs with $\omega(G)\in\{n(G)-1,n(G)-2,n(G)-3\}$. We end the section and the paper by showing that the answer to the problem whether for a planar graph we have $\dim_l(G) \leq \lceil (n(G)+1)/2 \rceil$, is negative. In the rest of the introduction we give further definitions needed and recall two results to be used later on. 

For a positive integer $t$, we denote the set $\{1,\ldots, t\}$ by $[t]$. For a subset $V'\subseteq V(G)$, the subgraph induced by $V'$ is denoted by $G[V']$. The open and the closed neighborhood of a vertex $u$ in $G$ are respectively denoted by $N_G(u)$ and $N_G[u]$. A clique $Q$ of a graph $G$ is a set of vertices that induce a complete subgraph of  $G$. By abuse of language we will also use the term clique for the subgraph induced by $Q$. Vertices $u$ and $v$ of a graph $G$ are {\em true twins} if $N_G[u] = N_G[v]$. Notice that true twins are adjacent vertices and that the relation of being a true twin is an equivalence relation on $V(G)$. The following bound will be  useful.

\begin{lemma} {\rm \cite[Observation 2.1]{Okamoto1}}
\label{lem:twins}
If $G$ is a connected graph having $k$ true twin equivalence classes, then $\dim_l(G) \ge n(G) - k$. 
\end{lemma}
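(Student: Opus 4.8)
The plan is to exploit the fact that the two vertices of a true twin pair are metrically indistinguishable from every other vertex, so that no local resolving set can afford to omit two members of the same true twin class.

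First I would record the basic distance property of true twins: if $u$ and $v$ are true twins in $G$, then $d_G(u,w) = d_G(v,w)$ for every $w \in V(G) \setminus \{u,v\}$. To see this, fix such a $w$ and assume without loss of generality that $d_G(u,w) \le d_G(v,w)$; take a shortest $u$–$w$ path $P \colon u = x_0, x_1, \dots, x_m = w$, so $m = d_G(u,w) \ge 1$. Since $w \ne v$ and $P$ is shortest, $v$ cannot lie on $P$, for otherwise, with $v = x_i$ and $i \ge 1$, the subpath of $P$ from $v$ to $w$ would be a $v$–$w$ path of length $m - i < m \le d_G(v,w)$, a contradiction. Hence $x_1 \notin \{u,v\}$, and from $N_G[u] = N_G[v]$ we get $x_1 \in N_G[v] \setminus \{v\} = N_G(v)$; replacing $x_0 = u$ by $v$ then yields a $v$–$w$ walk of length $m$, so $d_G(v,w) \le m = d_G(u,w)$, and therefore equality holds.

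Next I would take a minimum local resolving set $W$ of $G$, so $|W| = \dim_l(G)$, and argue that every true twin equivalence class $C$ satisfies $|C \setminus W| \le 1$. Indeed, if $u$ and $v$ were two distinct vertices of $C \setminus W$, then $u$ and $v$ are adjacent (true twins always are) and both lie in $V(G) \setminus W$, so by the definition of a local resolving set some $w \in W$ must distinguish them. But then $w \notin \{u,v\}$, and the distance identity above forces $d_G(u,w) = d_G(v,w)$, a contradiction. Summing the inequality $|C \setminus W| \le 1$ over the $k$ true twin classes gives $n(G) - \dim_l(G) = |V(G) \setminus W| \le k$, that is, $\dim_l(G) \ge n(G) - k$.

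There is no serious obstacle in this argument; the only step requiring any care is the distance identity for true twins, and even that is the short path-surgery argument sketched above. One could alternatively obtain that identity by noting that the transposition swapping $u$ and $v$ and fixing every other vertex is an automorphism of $G$, which immediately yields $d_G(u,w) = d_G(v,w)$ for all $w \notin \{u,v\}$.
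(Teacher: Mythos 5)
Your argument is correct and complete: the distance identity for true twins, the observation that a local resolving set can omit at most one vertex from each true twin class, and the summation over the $k$ classes together give exactly $\dim_l(G)\ge n(G)-k$. The paper itself states this lemma as a citation of \cite[Observation 2.1]{Okamoto1} and gives no proof, so there is nothing to compare against; your proof is the standard one and would serve as a self-contained justification.
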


In~\cite{isariyapalakul-2024} the role of true twin equivalence classes was investigated for the connected local dimension, that is, for the metric dimension where resolving sets are required to be connected. Finally, we recall the following result needed.  

\begin{theorem} {\rm \cite[Theorem 6]{Abrishami1}}
\label{cth1}
If $G$ is a triangle-free graph with $n(G) \geq 3$, then
\[\dim_l(G)\leq\frac{2}{5}n(G)\,.\]
\end{theorem}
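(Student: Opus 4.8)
The plan is to dispose of the bipartite case trivially and then attack the non-bipartite case through a breadth-first layering. If $G$ is bipartite, then $\dim_l(G)=1$ by Theorem~\ref{cth0}(III), and since $n(G)\ge 3$ this is at most $\tfrac{2}{5}n(G)$. So assume henceforth that $G$ is triangle-free and \emph{not} bipartite; then $G$ has an odd cycle, and triangle-freeness forces its shortest odd cycle to have length at least $5$, so $n(G)\ge 5$.

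Fix a vertex $r$ and let $L_i=\{v\in V(G):d_G(v,r)=i\}$ be the layers of a breadth-first search from $r$; call an edge \emph{horizontal} if its two endpoints lie in a common layer. If we insist that $r\in W$, then $r$ already resolves every non-horizontal edge (its endpoints sit in consecutive layers), so $W\ni r$ is a local resolving set as soon as $W$ resolves every horizontal edge both of whose endpoints avoid $W$. Two observations make this tractable. First, since $G$ is non-bipartite at least one horizontal edge exists (otherwise the layer parities would $2$-colour $G$). Second — and this is where triangle-freeness enters — adjacent vertices of $G$ have no common neighbour, so if $uv\subseteq L_i$ is horizontal and $p\in N_G(u)\cap L_{i-1}$ is any ``parent'' of $u$, then $d_G(u,p)=1$ whereas $d_G(v,p)=2$ (it is $\le 2$ via $u$, and $=1$ would create the triangle $puv$); hence any parent of $u$, and symmetrically of $v$, resolves $uv$, and the parent sets $N_G(u)\cap L_{i-1}$ and $N_G(v)\cap L_{i-1}$ are nonempty and disjoint. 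Writing $P(uv)=\{u,v\}\cup(N_G(u)\cap L_{i-1})\cup(N_G(v)\cap L_{i-1})$, we conclude: $W=\{r\}\cup W'$ is a local resolving set whenever $W'$ meets $P(uv)$ for every horizontal edge $uv$, and each such $P(uv)$ has size at least $4$.

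It then remains to produce a transversal $W'$ of the sets $\{P(uv)\}$ with $|W'|\le\tfrac{2}{5}n(G)-1$; this is the combinatorial core, and it is exactly where the constant $\tfrac{2}{5}$ is pinned down. Indeed for $C_5$ there is a single horizontal edge $uv$ with $|P(uv)|=4$, one vertex transverses it, and $\dim_l(C_5)=2=\tfrac{2}{5}\cdot5$, so $\tfrac{2}{5}$ cannot be improved and $C_5$ is the extremal example to keep in mind. I would establish the bound by an amortized/discharging scheme: process the horizontal edges and, each time a vertex $w$ is newly forced into $W'$, exhibit a set of at least five vertices of $G$ — assembled from $w$, the horizontal edge(s) it handles, and enough of their layer-$(i{-}1)$ parents and neighbourhood structure — carrying ``$W$-weight'' at most $2$, while arranging these witness sets to be pairwise disjoint; summation then yields $|W|\le\tfrac{2}{5}n(G)$. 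An alternative is an induction on $n(G)$: peel off a chordless shortest odd cycle together with a bounded amount of surrounding material, charging two of its vertices to $W$, and reduce to a smaller triangle-free graph.

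The reduction to horizontal edges is routine; the real obstacle is the bookkeeping in the previous paragraph. Horizontal edges can pile up inside a single layer and can share parents, so the witness sets must be selected so as to avoid double counting across edges, across layers, and in the region near $r$. Controlling this overlap — and, in the induction alternative, verifying that the local resolving property survives the peeling — is the delicate point, and it is presumably where the finer structure of a shortest odd cycle (no chords, local girth $5$ around it) has to be used in full.
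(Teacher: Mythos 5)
This statement is not proved in the paper at all: it is quoted verbatim from \cite[Theorem 6]{Abrishami1}, so there is no in-paper argument to compare yours against. Judged on its own, your proposal has a genuine gap. The reduction in your first two paragraphs is sound: with a BFS root $r$ placed in $W$, only horizontal edges need further resolving, and for a horizontal edge $uv$ in layer $L_i$ triangle-freeness does guarantee that the parent sets $N_G(u)\cap L_{i-1}$ and $N_G(v)\cap L_{i-1}$ are nonempty, disjoint, and that any vertex of either one distinguishes $u$ from $v$; hence it suffices that $W\setminus\{r\}$ be a transversal of the sets $P(uv)$. But the entire quantitative content of the theorem is the existence of such a transversal of size at most $\tfrac{2}{5}n(G)-1$, and this you do not prove: you only announce that you ``would establish the bound by an amortized/discharging scheme'' or ``an induction on $n(G)$,'' without specifying the witness sets, the discharging rules, or the inductive peeling step, and you yourself concede that the bookkeeping there ``is the delicate point.'' Knowing that each $P(uv)$ has at least $4$ elements is nowhere near enough, since the sets $P(uv)$ can overlap arbitrarily and their union can be a small fraction of $V(G)$, so no generic covering or counting argument converts the size-$4$ lower bound into the $\tfrac{2}{5}n(G)$ conclusion; the tightness at $C_5$ that you point out shows there is no slack to absorb sloppy charging. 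As it stands, the proposal is a plausible reduction plus a declaration of intent, not a proof.

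A secondary point: the hitting-set formulation you reach depends on the choice of the root $r$, and for a fixed $r$ it is not evident that an efficient transversal always exists (horizontal edges deep in the layering can form long induced paths or matchings whose parent sets are all distinct). Any completed argument would therefore also have to either optimize over $r$ or prove the transversal bound uniformly in $r$; this choice is not addressed in the proposal.
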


\section{Local dimension when $\omega(G)\le n(G)-3$}
\label{sec:omega}

In this section we focus on the local metric dimension of graphs $G$ with $\omega(G)\in \{n(G)-3,n(G)-2\}$, as well as of $\omega(G)\le n(G) - 3$. 

Let $n\ge 3$ and let $\mu$ and $\lambda$ be positive integers with $1\le \mu\le \lambda$ and $\lambda + \mu \le n-1$. Then we will denote by $K_n^{-}(\lambda, \mu)$ the graph obtained from $K_n$ by removing the edges of some $K_{\lambda, \mu}$ subgraph. The structure of the obtained  graph does not depend on the selection of the subgraph $K_{\lambda, \mu}$, hence $K_n^{-}(\lambda, \mu)$ is well-defined. 

\begin{lemma}
\label{lem:Kn-}
If $n\ge 3$, $1\le \mu\le \lambda$, and $\lambda + \mu \le n-1$, then 
$$
\dim_l(K_n^{-}(\lambda, \mu)) = 
\begin{cases}
n-2; & \mu = 1, \\ 
n-3; & \text{otherwise}.
\end{cases}
$$
\end{lemma}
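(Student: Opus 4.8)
The plan is to first describe $G=K_n^{-}(\lambda,\mu)$ structurally. Its vertex set partitions as $V(G)=A\cup B\cup C$, where $|A|=\lambda$, $|B|=\mu$, and $|C|=n-\lambda-\mu\ge 1$; each of $A$, $B$, $C$ induces a clique; $C$ is complete to both $A$ and $B$; and there are no edges between $A$ and $B$. Consequently every two vertices of $G$ are at distance at most $2$: any two vertices that are not both contained in $A\cup B$ are adjacent, while $d_G(a,b)=2$ for $a\in A$ and $b\in B$, a shortest path passing through any vertex of $C$. In particular, every vertex of $C$ is universal, so $d_G(c,w)=1$ for every $c\in C$ and every $w\in V(G)\setminus\{c\}$.

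Next I would identify the true twin classes. For $a\in A$ we have $N_G[a]=A\cup C$, for $b\in B$ we have $N_G[b]=B\cup C$, and for $c\in C$ we have $N_G[c]=V(G)$; since $A$ and $B$ are nonempty and disjoint, these three closed neighborhoods are pairwise different, so $G$ has exactly the three true twin equivalence classes $A$, $B$, $C$. By Lemma~\ref{lem:twins}, $\dim_l(G)\ge n-3$; this already settles the lower bound in the case $\mu\ge 2$, and supplies one of the two required inequalities in the case $\mu=1$.

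For the case $\mu=1$ the key observation is that $A\cup C$ is a clique of order $\lambda+(n-\lambda-1)=n-1$ and that no clique can contain all $n$ vertices, because a vertex of $A$ and the unique vertex of $B$ are nonadjacent; hence $\omega(G)=n-1$, and Theorem~\ref{cth0}(II) immediately gives $\dim_l(G)=n-2$. For the case $\mu\ge 2$ (so also $\lambda\ge\mu\ge 2$) it remains to exhibit a local resolving set of size $n-3$. I would pick vertices $a_0\in A$, $b_0\in B$, $c_0\in C$ and set $W=V(G)\setminus\{a_0,b_0,c_0\}$. The only adjacent pairs inside $V(G)\setminus W$ are $\{a_0,c_0\}$ and $\{b_0,c_0\}$, since $a_0$ and $b_0$ are nonadjacent. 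Any $w\in B\setminus\{b_0\}$ distinguishes $a_0$ and $c_0$, because $d_G(a_0,w)=2\ne 1=d_G(c_0,w)$, and $B\setminus\{b_0\}\ne\emptyset$ as $\mu\ge 2$; symmetrically, any $w\in A\setminus\{a_0\}$ distinguishes $b_0$ and $c_0$, and $A\setminus\{a_0\}\ne\emptyset$ as $\lambda\ge 2$. Hence $W$ is a local resolving set, so $\dim_l(G)\le n-3$, and together with the lower bound we conclude $\dim_l(G)=n-3$.

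The argument is short and I do not anticipate a genuine obstacle; the two places that require attention are recognizing that the subcase $\mu=1$ is precisely the situation $\omega(G)=n-1$ covered by Theorem~\ref{cth0}(II), and, in the subcase $\mu\ge 2$, checking that the set $W$ resolves each adjacent pair lying outside it — where the crucial points are that $a_0$ and $b_0$ impose no condition since they are nonadjacent, and that the distinguishing vertices exist exactly because $B$ (respectively $A$) retains a vertex after deleting $b_0$ (respectively $a_0$), which is guaranteed by $\mu\ge 2$ (respectively $\lambda\ge\mu\ge 2$).
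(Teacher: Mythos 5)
Your proposal is correct and follows essentially the same route as the paper: the lower bound via the three true twin classes $A$, $B$, $C$ and Lemma~\ref{lem:twins}, the case $\mu=1$ via $\omega=n-1$ and Theorem~\ref{cth0}(II), and the upper bound for $\mu\ge 2$ by exhibiting the complement of one vertex from each class as a local resolving set. You merely spell out the distance verification that the paper leaves implicit.
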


\begin{proof}
If $\mu = 1$, then $\omega(K_n^{-}(\lambda, \mu)) = n-1$, hence $\dim_l(K_n^{-}(\lambda, \mu)) = n-2$ by  Theorem~\ref{cth0}(II). 

Assume now that $\mu \ge 2$ (and $\mu\le \lambda$, $\lambda + \mu \le n-1$). Let $L$ and $M$ be the bipartition sets of a $K_{\lambda, \mu}$ subgraph, where $|L|=\lambda$, $|M|=\mu$, removing the edges of which produces $K_n^{-}(\lambda, \mu)$. Then $|L|\ge 2$, $|M|\ge 2$, and $\{L, M, V(K_n)\setminus (L\cup M)\}$ is a partition into twin-equivalence classes. Hence by Lemma~\ref{lem:twins}, $\dim_l(K_n^{-}(\lambda, \mu)) \ge n-3$. On the other hand, a set consisting of $|L|-1$ vertices from $L$, of $|M|-1$ vertices from $M$, and of $|V(K_n)\setminus (L\cup M)|-1$ vertices from $V(K_n)\setminus (L\cup M)$ is a local resolving set of cardinality $n-3$. 
\end{proof}

\begin{theorem}\label{th1}
If $G$ is a connected graph with $n(G) \geq 5$ and $\omega(G)\leq n(G)-3$, then $\dim_l(G) \leq n(G)-3$. Moreover, the equality holds if and only if $G$ is either $C_5$ or $K_n^{-}(\lambda, \mu)$, where $\lambda\geq\mu\geq2$ and $\lambda+\mu<n(G)$.
\end{theorem}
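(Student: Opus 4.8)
The plan is to prove the upper bound by a degree/structure argument on the complement, and then treat the equality case by a careful analysis of the true-twin structure. First I would establish the inequality $\dim_l(G)\le n-3$. Since $\omega(G)\le n-3$, the graph is not complete and not $K_n$ minus a matching-type obstruction, so I expect the complement $\overline{G}$ to contain enough edges to force many vertices into few true-twin classes, or alternatively to apply Theorem~\ref{cth0}(I),(II) to rule out $\dim_l(G)\in\{n-1,n-2\}$ directly: indeed $\dim_l(G)=n-1$ iff $G=K_n$, which is excluded since $\omega(G)\le n-3<n$; and $\dim_l(G)=n-2$ iff $\omega(G)=n-1$, again excluded. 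Hence $\dim_l(G)\le n-3$ follows immediately from Theorem~\ref{cth0} once $n\ge 3$. (One must double-check the edge case $n=5$, but that is covered since $\omega(G)\le 2$ there means $G$ is bipartite, so $\dim_l(G)=1\le 2$ by Theorem~\ref{cth0}(III); the only way to reach $n-3=2$ is examined in the equality analysis.)

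The substance is the equality characterization: I want to show $\dim_l(G)=n-3$ forces $G\cong C_5$ or $G\cong K_n^{-}(\lambda,\mu)$ with $\lambda\ge\mu\ge 2$ and $\lambda+\mu<n$. The easy direction is that both families achieve the bound: for $K_n^{-}(\lambda,\mu)$ with $\mu\ge 2$ this is exactly Lemma~\ref{lem:Kn-}, and for $C_5$ one checks directly that $\dim_l(C_5)=2=n-3$ (any single vertex fails to distinguish its two neighbors). For the hard direction, suppose $\dim_l(G)=n-3$. The key tool is Lemma~\ref{lem:twins}: if $G$ had $k$ true-twin classes then $\dim_l(G)\ge n-k$, which gives no constraint when $k\ge 3$; so instead I use the contrapositive philosophy — I want to show that if $G$ has ``many'' true-twin classes or is otherwise structurally rich, then $\dim_l(G)$ drops below $n-3$. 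More precisely, I would try to argue: if $G\not\cong C_5$ and $\dim_l(G)=n-3$, then $G$ must have at most $3$ true-twin classes, and then classify the graphs on $\le 3$ true-twin classes with $\omega\le n-3$, showing the only ones left are the $K_n^{-}(\lambda,\mu)$.

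The main obstacle is the step ``$\dim_l(G)=n-3$ implies few true-twin classes (or $C_5$).'' The difficulty is that Lemma~\ref{lem:twins} only gives a lower bound on $\dim_l$, whereas here I need an \emph{upper} bound good enough to contradict $n-3$ when the structure is rich; this requires actually exhibiting small local resolving sets. I expect the argument to go via a dichotomy on triangle content: if $G$ is triangle-free then Theorem~\ref{cth1} gives $\dim_l(G)\le \tfrac25 n$, which is $<n-3$ once $n\ge 6$, leaving only small triangle-free graphs — among which $C_5$ is the unique one with $\dim_l=n-3$. If $G$ contains a triangle, then $\omega(G)\ge 3$, and combined with $\omega(G)\le n-3$ one has $n\ge 6$; here I would analyze a maximum clique $Q$ together with the at least three vertices outside it, showing that unless the non-edges of $G$ form a complete bipartite pattern (i.e.\ $G=K_n^{-}(\lambda,\mu)$) one can build a local resolving set of size $n-4$ by choosing all-but-one vertex from each true-twin class and exploiting a vertex that distinguishes some adjacent pair ``for free.'' Verifying that this construction always works except in the $K_n^{-}(\lambda,\mu)$ case, and handling the boundary constraint $\lambda+\mu<n$ (i.e.\ that there is at least one ``full'' vertex, otherwise $G$ itself is a smaller complete-multipartite-like graph whose dimension must be recomputed), is where the real case-checking lies, and is where I would expect the authors' proof to spend most of its effort.
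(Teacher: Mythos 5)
Your reduction of the inequality $\dim_l(G)\le n(G)-3$ to Theorem~\ref{cth0}(I),(II) is correct and is in fact a cleaner route than the paper's, which obtains the inequality only as a byproduct of its full case analysis: since $\dim_l(G)\le n(G)-1$ always holds and the values $n(G)-1$ and $n(G)-2$ are attained exactly when $\omega(G)=n(G)$ and $\omega(G)=n(G)-1$, the hypothesis $\omega(G)\le n(G)-3$ immediately forces $\dim_l(G)\le n(G)-3$. (Your parenthetical for $n=5$ contains a slip: $\omega(G)\le 2$ means triangle-free, not bipartite --- $C_5$ itself is the counterexample --- but that remark is not needed for the inequality.) Your sufficiency direction and your handling of the triangle-free case of necessity (Theorem~\ref{cth1} gives $\dim_l(G)\le \frac{2}{5}n(G)<n(G)-3$ for $n(G)\ge 6$, and for $n(G)=5$ the only non-bipartite triangle-free graph is $C_5$) are also sound and match what the paper does in its Case 3.3.

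The genuine gap is the necessity direction when $G$ contains a triangle, which is where essentially the entire content of the theorem lies and which you explicitly defer (``where the real case-checking lies''). The claim you would need --- that every connected $G$ with $3\le\omega(G)\le n(G)-3$ not isomorphic to some $K_{n}^{-}(\lambda,\mu)$ admits a local resolving set of size $n(G)-4$ --- does not follow from the mechanism you sketch. In particular, ``choose all-but-one vertex from each true-twin class and exploit one free distinction'' cannot work as stated: Lemma~\ref{lem:twins} only bounds $\dim_l$ from below, and the complement of a transversal of the twin classes need not be a local resolving set when there are many classes, since adjacent vertices lying in different singleton classes can still be equidistant from every retained vertex. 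Likewise, the intermediate goal ``$\dim_l(G)=n(G)-3$ implies at most three true-twin classes'' is essentially equivalent to the theorem itself rather than a stepping stone towards it. The paper instead fixes a maximum clique $Q$, takes a largest set $S\subseteq Q$ of clique vertices with pairwise distinct neighbourhoods outside $Q$, and splits on $|S|\ge 4$, $|S|=3$, $|S|=2$; the latter two cases require exhibiting explicit $(n(G)-4)$-element complements after classifying the possible induced subgraphs on $S$, a small distinguishing set $A$, and an outside vertex $v$ (the twenty-eight configurations of Figures~\ref{nfig1}--\ref{nfig4}), plus four further sub-cases when $|S|=2$, the last of which is where $K_n^{-}(\lambda,\mu)$ finally emerges. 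None of this verification is present in, or clearly derivable from, your outline, so the equality characterization is not established.
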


\begin{proof}
Set $n = n(G)$ and $k = \omega(G)$ for the rest of the proof. Then $n\ge 5$ and $k \le n - 3$. 

Let $Q$ be a clique of $G$ with $|V(Q)| = k$. Let $S$ be a largest subset of vertices of $Q$ such that for any vertices $s$ and $s'$ from $S$  we have $N_G(s)\cap(V(G)-Q) \neq N_G(s') \cap (V(G)-Q)$. Since $G$ is connected and $\omega(G)=k$, we infer that $|S| \geq 2$. Depending on the cardinality of $S$, we distinguish three cases. 

\medskip\noindent
{\bf Case 1}: $|S| \geq 4$. \\
By the definition of $S$, the set $V(G)-S$ is a local resolving set for $G$. Therefore, $\dim_l(G)\leq n-|S|\leq n-4$.

\medskip\noindent
{\bf Case 2}: $|S| = 3$. \\
Let $S=\{s_1,s_2,s_3\}$, and let $A$ be a minimal subset of $V(G)-S$ such that $N_G(s_i)\cap A\neq N_G(s_j)\cap A$ for each $i,j\in [3]$, $i\ne j$. Since $G[S]\cong K_3$, we infer that $|A|=2$. Let $A=\{a_1,a_2\}$ and let $v$ be an arbitrary vertex from the set $V(G)-(Q\cup A)$. Such a vertex exists since $\omega(G)\leq n(G)-3$.

If for each $i\in [3]$, $s_i$ is not adjacent to $v$, or  there is a vertex in $Q-S$ that is not adjacent to $v$, then $V(G)-\{s_1,s_2,s_3,v\}$ is a local resolving set for $G$ and we are done. 

If for each $i\in [3]$, the edge $vs_i$ exists, then since $\omega(G)=k$, at least one vertex in $Q-S$ is not adjacent to $v$. This means that the set $V(G)-\{s_1,s_2,s_3,v\}$ again forms a local resolving set for $G$. 

By the above we may now assume that $N_G(v)\cap\,(Q-S)=Q-S$ and $1\leq\,|N_G(v)\cap\,S|\leq2$. We consider two sub-cases, where we often employ the definitions of $S$ and $A$.

\medskip\noindent
{\bf Case 2.1}: $|N_G(v) \cap S| = 1$. \\
Let $i\in [3]$ be the index such that $vs_i\in E(G)$. 
If $N_G(s_i)\cap\,A\neq\,N_G(v)\cap\,A$, then $V(G)-\{s_1,s_2,s_3,v\}$ is a local resolving set for $G$. If $|N_G(s_i)\cap\,A|=|N_G(v)\cap\,A|=0$, then $V(G)-\{s_i,a_1,a_2,v\}$ is a local resolving set for $G$. If $N_G(s_i)\cap\,A=N_G(v)\cap\,A=\{a_1\}$, then $V(G)-\{s_i,s_j,a_2,v\}$ is a local resolving set for $G$, where $j\in[3]$ and $s_ja_1\not\in E(G)$. If $N_G(s_i)\cap\,A=N_G(v)\cap\,A=\{a_2\}$, then the proof is similar to last case and we omit it.  If $N_G(s_i)\cap\,A=N_G(v)\cap\,A=\{a_1,a_2\}$ and $|N_G(a_l)\cap\,S|=2$ for $l\in[2]$, then $V(G)-\{s_i,a_1,a_2,v\}$ is a local resolving set for $G$. If $N_G(s_i)\cap\,A=N_G(v)\cap\,A=\{a_1,a_2\}$ and $|N_G(a_l)\cap\,S|=1$ for $l\in[2]$, then $V(G)-\{s_i,s_j,a_l,v\}$ is a local resolving set for $G$, where $j\in[3]$ and $s_ja_l\not\in E(G)$.

\medskip\noindent
{\bf Case 2.2}: $|N_G(v) \cap S| = 2$. \\
If $|N_G(v) \cap A| = 0$, then $V(G) - \{s_l, a_1, a_2, v\}$ serves as a local resolving set for $G $, where among the vertices in $\{s_1, s_2, s_3\}$, the vertex $s_l$ has has the least number of neighbors in $\{a_1, a_2\}$. 

If $|N_G(v) \cap A| =1$ and for each $i\in[3]$, $|N_G(s_i)\cap\,A|\leq1$, then by applying the conditions on $A$ one can observe that there is an $l\in[3]$ such that $|N_G(s_l)\cap\,A|=0$ and $|N_G(s_h)\cap\,A|=1$ for $h\in([3]-\{l\})$. Therefore $V(G) - \{s_h, a_1, a_2, v\}$ is a local resolving set for $G$, where $h\in([3]-\{l\})$,  $N_G(s_h)\cap\,A=N_G(v)\cap\,A$. 

If $|N_G(v) \cap A| = 1$ and $\{|N_G(s_h)\cap A|:\ h\in[3]\}=\{0,1,2\}$, then by using the assumptions on both $S$ and $A$, one can verify that $G[S\cup A\cup\{v\}]$ is one of the graphs illustrated in Fig.~\ref{nfig1}. 

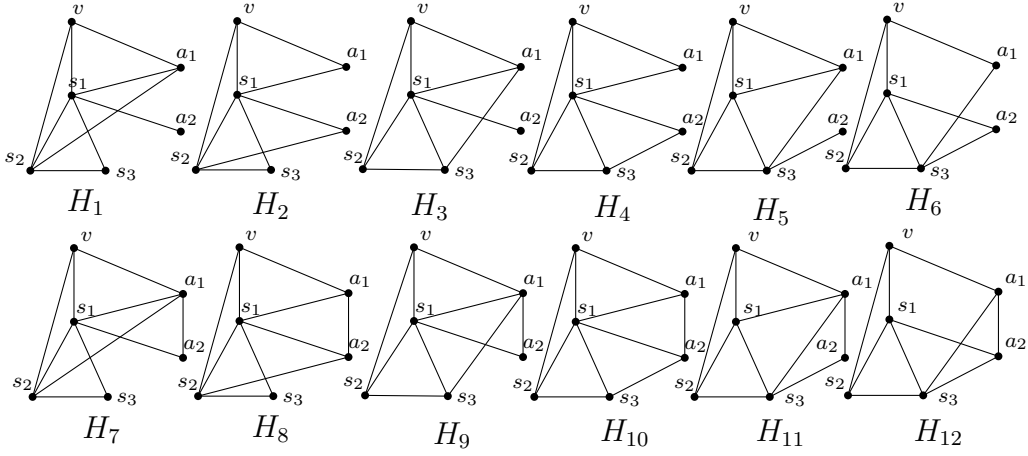
\begin{figure}[ht!]
\begin{center}
\begin{tikzpicture}
\clip(-6.5,-2.9) rectangle (7.5,3.6);
\draw (-6,1)-- (-5,1);
\draw (-5,1)-- (-5.45,2);
\draw (-5.45,2)-- (-6,1);
\draw (-5.45,2.98)-- (-6,1);
\draw (-5.45,2.98)-- (-5.45,2);
\draw (-5.45,2.98)-- (-4,2.37);
\draw (-5.45,2)-- (-4,2.37);
\draw (-5.45,2)-- (-4,1.52);
\draw (-6,1)-- (-4,2.37);
\draw (-3.8,1.01)-- (-2.8,1.01);
\draw (-2.8,1.01)-- (-3.25,2.01);
\draw (-3.25,2.01)-- (-3.8,1.01);
\draw (-3.25,2.99)-- (-3.8,1.01);
\draw (-3.25,2.99)-- (-3.25,2.01);
\draw (-3.25,2.99)-- (-1.8,2.38);
\draw (-3.25,2.01)-- (-1.8,2.38);
\draw (-3.25,2.01)-- (-1.8,1.53);
\draw (-3.8,1.01)-- (-1.8,1.53);
\draw (-1.58,1.02)-- (-0.49,1.01);
\draw (-0.49,1.01)-- (-0.94,2.01);
\draw (-0.94,2.01)-- (-1.58,1.02);
\draw (-0.94,2.99)-- (-1.58,1.02);
\draw (-0.94,2.99)-- (-0.94,2.01);
\draw (-0.94,2.99)-- (0.52,2.38);
\draw (-0.94,2.01)-- (0.52,2.38);
\draw (-0.94,2.01)-- (0.52,1.53);
\draw (-0.49,1.01)-- (0.52,2.38);
\draw (0.66,1)-- (1.66,1);
\draw (1.66,1)-- (1.21,2);
\draw (1.21,2)-- (0.66,1);
\draw (1.21,2.98)-- (0.66,1);
\draw (1.21,2.98)-- (1.21,2);
\draw (1.21,2.98)-- (2.67,2.37);
\draw (1.21,2)-- (2.67,2.37);
\draw (1.21,2)-- (2.67,1.52);
\draw (1.66,1)-- (2.67,1.52);
\draw (2.79,1)-- (3.79,1);
\draw (3.79,1)-- (3.34,2);
\draw (3.34,2)-- (2.79,1);
\draw (3.34,2.98)-- (2.79,1);
\draw (3.34,2.98)-- (3.34,2);
\draw (3.34,2.98)-- (4.8,2.37);
\draw (3.79,1)-- (4.8,2.37);
\draw (3.79,1)-- (4.8,1.52);
\draw (3.34,2)-- (4.8,2.37);
\draw (4.84,1.03)-- (5.84,1.03);
\draw (5.84,1.03)-- (5.39,2.03);
\draw (5.39,2.03)-- (4.84,1.03);
\draw (5.39,3.01)-- (4.84,1.03);
\draw (5.39,3.01)-- (5.39,2.03);
\draw (5.39,3.01)-- (6.84,2.4);
\draw (5.84,1.03)-- (6.84,1.55);
\draw (5.84,1.03)-- (6.84,2.4);
\draw (5.39,2.03)-- (6.84,1.55);
\draw (-5.97,-2.01)-- (-4.97,-2.01);
\draw (-4.97,-2.01)-- (-5.42,-1.01);
\draw (-5.42,-1.01)-- (-5.97,-2.01);
\draw (-5.42,-0.03)-- (-5.97,-2.01);
\draw (-5.42,-0.03)-- (-5.42,-1.01);
\draw (-5.42,-0.03)-- (-3.97,-0.64);
\draw (-5.42,-1.01)-- (-3.97,-0.64);
\draw (-5.42,-1.01)-- (-3.97,-1.49);
\draw (-5.97,-2.01)-- (-3.97,-0.64);
\draw (-3.77,-2)-- (-2.77,-2);
\draw (-2.77,-2)-- (-3.22,-1);
\draw (-3.22,-1)-- (-3.77,-2);
\draw (-3.22,-0.02)-- (-3.77,-2);
\draw (-3.22,-0.02)-- (-3.22,-1);
\draw (-3.22,-0.02)-- (-1.77,-0.63);
\draw (-3.22,-1)-- (-1.77,-0.63);
\draw (-3.22,-1)-- (-1.77,-1.48);
\draw (-3.77,-2)-- (-1.77,-1.48);
\draw (-1.55,-1.99)-- (-0.45,-2);
\draw (-0.45,-2)-- (-0.9,-1);
\draw (-0.9,-1)-- (-1.55,-1.99);
\draw (-0.9,-0.02)-- (-1.55,-1.99);
\draw (-0.9,-0.02)-- (-0.9,-1);
\draw (-0.9,-0.02)-- (0.55,-0.63);
\draw (-0.9,-1)-- (0.55,-0.63);
\draw (-0.9,-1)-- (0.55,-1.48);
\draw (-0.45,-2)-- (0.55,-0.63);
\draw (0.7,-2.01)-- (1.7,-2.01);
\draw (1.7,-2.01)-- (1.25,-1.01);
\draw (1.25,-1.01)-- (0.7,-2.01);
\draw (1.25,-0.03)-- (0.7,-2.01);
\draw (1.25,-0.03)-- (1.25,-1.01);
\draw (1.25,-0.03)-- (2.7,-0.64);
\draw (1.25,-1.01)-- (2.7,-0.64);
\draw (1.25,-1.01)-- (2.7,-1.49);
\draw (1.7,-2.01)-- (2.7,-1.49);
\draw (2.83,-2.01)-- (3.83,-2.01);
\draw (3.83,-2.01)-- (3.38,-1.01);
\draw (3.38,-1.01)-- (2.83,-2.01);
\draw (3.38,-0.03)-- (2.83,-2.01);
\draw (3.38,-0.03)-- (3.38,-1.01);
\draw (3.38,-0.03)-- (4.83,-0.64);
\draw (3.83,-2.01)-- (4.83,-0.64);
\draw (3.83,-2.01)-- (4.83,-1.49);
\draw (3.38,-1.01)-- (4.83,-0.64);
\draw (4.87,-1.99)-- (5.87,-1.99);
\draw (5.87,-1.99)-- (5.42,-0.98);
\draw (5.42,-0.98)-- (4.87,-1.99);
\draw (5.42,0)-- (4.87,-1.99);
\draw (5.42,0)-- (5.42,-0.98);
\draw (5.42,0)-- (6.87,-0.61);
\draw (5.87,-1.99)-- (6.87,-1.47);
\draw (5.87,-1.99)-- (6.87,-0.61);
\draw (5.42,-0.98)-- (6.87,-1.47);
\draw (-5.66,0.9) node[anchor=north west] {$H_1$};
\draw (-3.2,0.85) node[anchor=north west] {$H_2$};
\draw (-1.09,0.84) node[anchor=north west] {$H_3$};
\draw (1.34,0.82) node[anchor=north west] {$H_4$};
\draw (3.45,0.77) node[anchor=north west] {$H_5$};
\draw (5.48,0.91) node[anchor=north west] {$H_6$};
\draw (-5.45,-2.13) node[anchor=north west] {$H_7$};
\draw (-3.2,-2.11) node[anchor=north west] {$H_8$};
\draw (-0.79,-2.2) node[anchor=north west] {$H_9$};
\draw (1.43,-2.17) node[anchor=north west] {$H_{10}$};
\draw (3.5,-2.17) node[anchor=north west] {$H_{11}$};
\draw (5.6,-2.16) node[anchor=north west] {$H_{12}$};
\draw (-3.97,-0.64)-- (-3.97,-1.49);
\draw (-1.77,-0.63)-- (-1.77,-1.48);
\draw (0.55,-0.63)-- (0.55,-1.48);
\draw (2.7,-0.64)-- (2.7,-1.49);
\draw (4.83,-0.64)-- (4.83,-1.49);
\draw (6.87,-0.61)-- (6.87,-1.47);
\begin{scriptsize}
\fill [color=black] (-6,1) circle (1.5pt);
\draw[color=black] (-6.2,1.16) node {$s_2$};
\fill [color=black] (-5,1) circle (1.5pt);
\draw[color=black] (-4.72,0.96) node {$s_3$};
\fill [color=black] (-5.45,2) circle (1.5pt);
\draw[color=black] (-5.36,2.16) node {$s_1$};
\fill [color=black] (-5.45,2.98) circle (1.5pt);
\draw[color=black] (-5.36,3.15) node {$v$};
\fill [color=black] (-4,2.37) circle (1.5pt);
\draw[color=black] (-3.91,2.54) node {$a_1$};
\fill [color=black] (-4,1.52) circle (1.5pt);
\draw[color=black] (-3.92,1.68) node {$a_2$};
\fill [color=black] (-3.8,1.01) circle (1.5pt);
\draw[color=black] (-4.0,1.17) node {$s_2$};
\fill [color=black] (-2.8,1.01) circle (1.5pt);
\draw[color=black] (-2.53,0.96) node {$s_3$};
\fill [color=black] (-3.25,2.01) circle (1.5pt);
\draw[color=black] (-3.09,2.18) node {$s_1$};
\fill [color=black] (-3.25,2.99) circle (1.5pt);
\draw[color=black] (-3.09,3.16) node {$v$};
\fill [color=black] (-1.8,2.38) circle (1.5pt);
\draw[color=black] (-1.64,2.55) node {$a_1$};
\fill [color=black] (-1.8,1.53) circle (1.5pt);
\draw[color=black] (-1.65,1.7) node {$a_2$};
\fill [color=black] (-1.58,1.02) circle (1.5pt);
\draw[color=black] (-1.7,1.18) node {$s_2$};
\fill [color=black] (-0.49,1.01) circle (1.5pt);
\draw[color=black] (-0.22,0.97) node {$s_3$};
\fill [color=black] (-0.94,2.01) circle (1.5pt);
\draw[color=black] (-0.78,2.18) node {$s_1$};
\fill [color=black] (-0.94,2.99) circle (1.5pt);
\draw[color=black] (-0.78,3.16) node {$v$};
\fill [color=black] (0.52,2.38) circle (1.5pt);
\draw[color=black] (0.68,2.55) node {$a_1$};
\fill [color=black] (0.52,1.53) circle (1.5pt);
\draw[color=black] (0.66,1.7) node {$a_2$};
\fill [color=black] (0.66,1) circle (1.5pt);
\draw[color=black] (0.5,1.16) node {$s_2$};
\fill [color=black] (1.66,1) circle (1.5pt);
\draw[color=black] (1.93,0.96) node {$s_3$};
\fill [color=black] (1.21,2) circle (1.5pt);
\draw[color=black] (1.37,2.16) node {$s_1$};
\fill [color=black] (1.21,2.98) circle (1.5pt);
\draw[color=black] (1.37,3.15) node {$v$};
\fill [color=black] (2.67,2.37) circle (1.5pt);
\draw[color=black] (2.83,2.54) node {$a_1$};
\fill [color=black] (2.67,1.52) circle (1.5pt);
\draw[color=black] (2.82,1.68) node {$a_2$};
\fill [color=black] (2.79,1) circle (1.5pt);
\draw[color=black] (2.65,1.16) node {$s_2$};
\fill [color=black] (3.79,1) circle (1.5pt);
\draw[color=black] (4.06,0.96) node {$s_3$};
\fill [color=black] (3.34,2) circle (1.5pt);
\draw[color=black] (3.5,2.16) node {$s_1$};
\fill [color=black] (3.34,2.98) circle (1.5pt);
\draw[color=black] (3.5,3.15) node {$v$};
\fill [color=black] (4.8,2.37) circle (1.5pt);
\draw[color=black] (4.96,2.54) node {$a_1$};
\fill [color=black] (4.8,1.52) circle (1.5pt);
\draw[color=black] (4.75,1.68) node {$a_2$};
\fill [color=black] (4.84,1.03) circle (1.5pt);
\draw[color=black] (4.7,1.19) node {$s_2$};
\fill [color=black] (5.84,1.03) circle (1.5pt);
\draw[color=black] (6.1,0.97) node {$s_3$};
\fill [color=black] (5.39,2.03) circle (1.5pt);
\draw[color=black] (5.55,2.19) node {$s_1$};
\fill [color=black] (5.39,3.01) circle (1.5pt);
\draw[color=black] (5.55,3.17) node {$v$};
\fill [color=black] (6.84,2.4) circle (1.5pt);
\draw[color=black] (7,2.56) node {$a_1$};
\fill [color=black] (6.84,1.55) circle (1.5pt);
\draw[color=black] (6.99,1.71) node {$a_2$};
\fill [color=black] (-5.97,-2.01) circle (1.5pt);
\draw[color=black] (-6.1,-1.84) node {$s_2$};
\fill [color=black] (-4.97,-2.01) circle (1.5pt);
\draw[color=black] (-4.71,-2.04) node {$s_3$};
\fill [color=black] (-5.42,-1.01) circle (1.5pt);
\draw[color=black] (-5.25,-0.85) node {$s_1$};
\fill [color=black] (-5.42,-0.03) circle (1.5pt);
\draw[color=black] (-5.25,0.13) node {$v$};
\fill [color=black] (-3.97,-0.64) circle (1.5pt);
\draw[color=black] (-3.8,-0.48) node {$a_1$};
\fill [color=black] (-3.97,-1.49) circle (1.5pt);
\draw[color=black] (-3.81,-1.33) node {$a_2$};
\fill [color=black] (-3.77,-2) circle (1.5pt);
\draw[color=black] (-3.9,-1.84) node {$s_2$};
\fill [color=black] (-2.77,-2) circle (1.5pt);
\draw[color=black] (-2.5,-2.04) node {$s_3$};
\fill [color=black] (-3.22,-1) circle (1.5pt);
\draw[color=black] (-3.06,-0.84) node {$s_1$};
\fill [color=black] (-3.22,-0.02) circle (1.5pt);
\draw[color=black] (-3.06,0.14) node {$v$};
\fill [color=black] (-1.77,-0.63) circle (1.5pt);
\draw[color=black] (-1.61,-0.47) node {$a_1$};
\fill [color=black] (-1.77,-1.48) circle (1.5pt);
\draw[color=black] (-1.62,-1.32) node {$a_2$};
\fill [color=black] (-1.55,-1.99) circle (1.5pt);
\draw[color=black] (-1.7,-1.83) node {$s_2$};
\fill [color=black] (-0.45,-2) circle (1.5pt);
\draw[color=black] (-0.19,-2.04) node {$s_3$};
\fill [color=black] (-0.9,-1) circle (1.5pt);
\draw[color=black] (-0.74,-0.84) node {$s_1$};
\fill [color=black] (-0.9,-0.02) circle (1.5pt);
\draw[color=black] (-0.74,0.14) node {$v$};
\fill [color=black] (0.55,-0.63) circle (1.5pt);
\draw[color=black] (0.71,-0.47) node {$a_1$};
\fill [color=black] (0.55,-1.48) circle (1.5pt);
\draw[color=black] (0.7,-1.32) node {$a_2$};
\fill [color=black] (0.7,-2.01) circle (1.5pt);
\draw[color=black] (0.55,-1.85) node {$s_2$};
\fill [color=black] (1.7,-2.01) circle (1.5pt);
\draw[color=black] (1.97,-2.05) node {$s_3$};
\fill [color=black] (1.25,-1.01) circle (1.5pt);
\draw[color=black] (1.41,-0.85) node {$s_1$};
\fill [color=black] (1.25,-0.03) circle (1.5pt);
\draw[color=black] (1.41,0.13) node {$v$};
\fill [color=black] (2.7,-0.64) circle (1.5pt);
\draw[color=black] (2.86,-0.48) node {$a_1$};
\fill [color=black] (2.7,-1.49) circle (1.5pt);
\draw[color=black] (2.85,-1.33) node {$a_2$};
\fill [color=black] (2.83,-2.01) circle (1.5pt);
\draw[color=black] (2.7,-1.85) node {$s_2$};
\fill [color=black] (3.83,-2.01) circle (1.5pt);
\draw[color=black] (4.17,-2.05) node {$s_3$};
\fill [color=black] (3.38,-1.01) circle (1.5pt);
\draw[color=black] (3.62,-0.85) node {$s_1$};
\fill [color=black] (3.38,-0.03) circle (1.5pt);
\draw[color=black] (3.62,0.13) node {$v$};
\fill [color=black] (4.83,-0.64) circle (1.5pt);
\draw[color=black] (5.07,-0.48) node {$a_1$};
\fill [color=black] (4.83,-1.49) circle (1.5pt);
\draw[color=black] (4.6,-1.33) node {$a_2$};
\fill [color=black] (4.87,-1.99) circle (1.5pt);
\draw[color=black] (4.7,-1.82) node {$s_2$};
\fill [color=black] (5.87,-1.99) circle (1.5pt);
\draw[color=black] (6.21,-2.04) node {$s_3$};
\fill [color=black] (5.42,-0.98) circle (1.5pt);
\draw[color=black] (5.67,-0.82) node {$s_1$};
\fill [color=black] (5.42,0) circle (1.5pt);
\draw[color=black] (5.67,0.17) node {$v$};
\fill [color=black] (6.87,-0.61) circle (1.5pt);
\draw[color=black] (7.11,-0.45) node {$a_1$};
\fill [color=black] (6.87,-1.47) circle (1.5pt);
\draw[color=black] (7.11,-1.3) node {$a_2$};
\end{scriptsize}
\end{tikzpicture}
\caption{Graphs $H_1, \ldots, H_{12}$.}
\label{nfig1}
\end{center}
\end{figure}

If $G[S\cup A\cup\{v\}]$ is either $H_1$ or $H_7$, then since $N_G(v)\cap\,(Q-S)=Q-S$ and $\omega(G)=k$, there exists a vertex $u\in Q-S$ such that  $uv\in E(G)$ and $ua_1\not\in E(G)$. Thus, $V(G) - \{s_1, s_2, a_1, v\}$ is a local resolving set for $G$. If it is either $H_2$, $H_3$, $H_4$, $H_5$, $H_8$, $H_9$, or $H_{11}$, then $V(G) - \{s_1, s_2, a_2, v\}$ is a local resolving set for $G$. And if it is either $H_6$ or $H_{12}$, then $V(G) - \{s_1, s_2, a_1, v\}$ is a local resolving set for $G$. 

If $|N_G(v) \cap A| = 1$ and $\{|N_G(s_h)\cap A|:\ h\in[3]\}=\{1,2\}$, then by applying  the assumptions on both $S$ and $A$, one can see that $G[S\cup A\cup\{v\}]$ is one of the graphs illustrated in Fig.~\ref{nfig2}.  

\begin{figure}[ht!]
\begin{center}
\begin{tikzpicture}
\clip(-6.5,0) rectangle (7,3.3);
\draw (-6,1)-- (-5,1);
\draw (-5,1)-- (-5.45,2);
\draw (-5.45,2)-- (-6,1);
\draw (-5.45,2.98)-- (-6,1);
\draw (-5.45,2.98)-- (-5.45,2);
\draw (-5.45,2.98)-- (-4,2.37);
\draw (-5.45,2)-- (-4,2.37);
\draw (-5.45,2)-- (-4,1.52);
\draw (-6,1)-- (-4,2.37);
\draw (-3.99,1.01)-- (-2.99,1.01);
\draw (-2.99,1.01)-- (-3.44,2.01);
\draw (-3.44,2.01)-- (-3.99,1.01);
\draw (-3.44,2.99)-- (-3.99,1.01);
\draw (-3.44,2.99)-- (-3.44,2.01);
\draw (-3.44,2.99)-- (-1.99,2.38);
\draw (-3.44,2.01)-- (-1.99,2.38);
\draw (-3.44,2.01)-- (-1.99,1.53);
\draw (-3.99,1.01)-- (-1.99,1.53);
\draw (-2,1)-- (-0.81,1);
\draw (-0.81,1)-- (-1.26,2);
\draw (-1.26,2)-- (-2,1);
\draw (-1.26,2.98)-- (-2,1);
\draw (-1.26,2.98)-- (-1.26,2);
\draw (-1.26,2.98)-- (0.19,2.37);
\draw (-0.81,1)-- (0.19,1.52);
\draw (-0.81,1)-- (0.19,2.37);
\draw (-5.66,0.9) node[anchor=north west] {$F_1$};
\draw (-3.39,0.85) node[anchor=north west] {$F_2$};
\draw (-1.09,0.84) node[anchor=north west] {$F_3$};
\draw (-5,1)-- (-4,1.52);
\draw (-2.99,1.01)-- (-1.99,2.38);
\draw (-1.26,2)-- (0.19,2.37);
\draw (-2,1)-- (0.19,1.52);
\draw (0.21,1)-- (1.21,1);
\draw (1.21,1)-- (0.76,2);
\draw (0.76,2)-- (0.21,1);
\draw (0.76,2.98)-- (0.21,1);
\draw (0.76,2.98)-- (0.76,2);
\draw (0.76,2.98)-- (2.21,2.37);
\draw (0.76,2)-- (2.21,2.37);
\draw (0.76,2)-- (2.21,1.52);
\draw (0.21,1)-- (2.21,2.37);
\draw (2.22,1.01)-- (3.22,1.01);
\draw (3.22,1.01)-- (2.77,2.01);
\draw (2.77,2.01)-- (2.22,1.01);
\draw (2.77,2.99)-- (2.22,1.01);
\draw (2.77,2.99)-- (2.77,2.01);
\draw (2.77,2.99)-- (4.22,2.38);
\draw (2.77,2.01)-- (4.22,2.38);
\draw (2.77,2.01)-- (4.22,1.53);
\draw (2.22,1.01)-- (4.22,1.53);
\draw (4.2,1)-- (5.39,0.99);
\draw (5.39,0.99)-- (4.94,1.99);
\draw (4.94,1.99)-- (4.2,1);
\draw (4.94,2.98)-- (4.2,1);
\draw (4.94,2.98)-- (4.94,1.99);
\draw (4.94,2.98)-- (6.39,2.36);
\draw (5.39,0.99)-- (6.39,1.51);
\draw (5.39,0.99)-- (6.39,2.36);
\draw (0.54,0.9) node[anchor=north west] {$F_4$};
\draw (2.82,0.85) node[anchor=north west] {$F_5$};
\draw (5.11,0.84) node[anchor=north west] {$F_6$};
\draw (1.21,1)-- (2.21,1.52);
\draw (3.22,1.01)-- (4.22,2.38);
\draw (4.94,1.99)-- (6.39,2.36);
\draw (4.2,1)-- (6.39,1.51);
\draw (2.21,2.37)-- (2.21,1.52);
\draw (4.22,2.38)-- (4.22,1.53);
\draw (6.39,2.36)-- (6.39,1.51);
\begin{scriptsize}
\fill [color=black] (-6,1) circle (1.5pt);
\draw[color=black] (-6.2,1.11) node {$s_2$};
\fill [color=black] (-5,1) circle (1.5pt);
\draw[color=black] (-4.92,1.16) node {$s_3$};
\fill [color=black] (-5.45,2) circle (1.5pt);
\draw[color=black] (-5.25,2.16) node {$s_1$};
\fill [color=black] (-5.45,2.98) circle (1.5pt);
\draw[color=black] (-5.36,3.15) node {$v$};
\fill [color=black] (-4,2.37) circle (1.5pt);
\draw[color=black] (-3.91,2.54) node {$a_1$};
\fill [color=black] (-4,1.52) circle (1.5pt);
\draw[color=black] (-4,1.68) node {$a_2$};
\fill [color=black] (-3.99,1.01) circle (1.5pt);
\draw[color=black] (-4.15,1.15) node {$s_2$};
\fill [color=black] (-2.99,1.01) circle (1.5pt);
\draw[color=black] (-2.76,1.06) node {$s_3$};
\fill [color=black] (-3.44,2.01) circle (1.5pt);
\draw[color=black] (-3.29,2.24) node {$s_1$};
\fill [color=black] (-3.44,2.99) circle (1.5pt);
\draw[color=black] (-3.28,3.16) node {$v$};
\fill [color=black] (-1.99,2.38) circle (1.5pt);
\draw[color=black] (-1.83,2.55) node {$a_1$};
\fill [color=black] (-1.99,1.53) circle (1.5pt);
\draw[color=black] (-1.95,1.7) node {$a_2$};
\fill [color=black] (-2,1) circle (1.5pt);
\draw[color=black] (-2.2,1.11) node {$s_2$};
\fill [color=black] (-0.81,1) circle (1.5pt);
\draw[color=black] (-0.57,0.95) node {$s_3$};
\fill [color=black] (-1.26,2) circle (1.5pt);
\draw[color=black] (-1.12,2.2) node {$s_1$};
\fill [color=black] (-1.26,2.98) circle (1.5pt);
\draw[color=black] (-1.1,3.15) node {$v$};
\fill [color=black] (0.19,2.37) circle (1.5pt);
\draw[color=black] (0.27,2.59) node {$a_1$};
\fill [color=black] (0.19,1.52) circle (1.5pt);
\draw[color=black] (0.23,1.73) node {$a_2$};
\fill [color=black] (0.21,1) circle (1.5pt);
\draw[color=black] (0.0,1.16) node {$s_2$};
\fill [color=black] (1.21,1) circle (1.5pt);
\draw[color=black] (1.31,1.23) node {$s_3$};
\fill [color=black] (0.76,2) circle (1.5pt);
\draw[color=black] (0.92,2.24) node {$s_1$};
\fill [color=black] (0.76,2.98) circle (1.5pt);
\draw[color=black] (0.92,3.15) node {$v$};
\fill [color=black] (2.21,2.37) circle (1.5pt);
\draw[color=black] (2.37,2.54) node {$a_1$};
\fill [color=black] (2.21,1.52) circle (1.5pt);
\draw[color=black] (2.05,1.74) node {$a_2$};
\fill [color=black] (2.22,1.01) circle (1.5pt);
\draw[color=black] (2.0,1.15) node {$s_2$};
\fill [color=black] (3.22,1.01) circle (1.5pt);
\draw[color=black] (3.53,1.11) node {$s_3$};
\fill [color=black] (2.77,2.01) circle (1.5pt);
\draw[color=black] (2.92,2.25) node {$s_1$};
\fill [color=black] (2.77,2.99) circle (1.5pt);
\draw[color=black] (2.93,3.16) node {$v$};
\fill [color=black] (4.22,2.38) circle (1.5pt);
\draw[color=black] (4.26,2.62) node {$a_1$};
\fill [color=black] (4.22,1.53) circle (1.5pt);
\draw[color=black] (4.08,1.77) node {$a_2$};
\fill [color=black] (4.2,1) circle (1.5pt);
\draw[color=black] (4.05,1.17) node {$s_2$};
\fill [color=black] (5.39,0.99) circle (1.5pt);
\draw[color=black] (5.68,1.0) node {$s_3$};
\fill [color=black] (4.94,1.99) circle (1.5pt);
\draw[color=black] (5.1,2.24) node {$s_1$};
\fill [color=black] (4.94,2.98) circle (1.5pt);
\draw[color=black] (5.11,3.14) node {$v$};
\fill [color=black] (6.39,2.36) circle (1.5pt);
\draw[color=black] (6.56,2.53) node {$a_1$};
\fill [color=black] (6.39,1.51) circle (1.5pt);
\draw[color=black] (6.55,1.67) node {$a_2$};
\end{scriptsize}
\end{tikzpicture}
\caption{Graphs $F_1,\ldots, F_6$.}
\label{nfig2}
\end{center}
\end{figure}
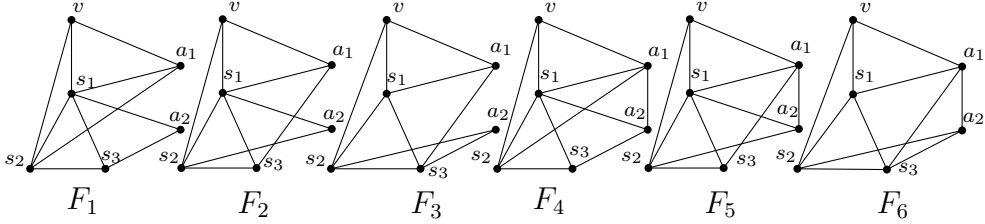

If $G[S\cup A\cup\{v\}]$ is either $F_1$ or $F_4$, then since $N_G(v)\cap\,(Q-S)=Q-S$ and $\omega(G)=k$, there is a  vertex such as $u$ in $Q-S$ that  $uv\in E(G)$ and $ua_1\not\in E(G)$. Thus, $V(G) - \{s_1, s_2, a_1, v\}$ is a local resolving set for $G$. If it is either $F_2$ or $F_5$, then $V(G) - \{s_1, s_2, a_2, v\}$ is a local resolving set for $G$. If it is $F_3$, then $V(G) - \{s_1, s_2, s_3, a_2\}$ is a local resolving set for $G$. If it is $F_6$, then $V(G) - \{s_1, s_2, a_1,v\}$ is a local resolving set for $G$.

If  $|N_G(v) \cap A| = 2$ and for each $i\in[3]$, $|N_G(s_i)\cap\,A|\leq1$, then $V(G) - \{s_1, s_2, s_3,v\}$ is a local resolving set for $G$.

If $|N_G(v) \cap A| = 2$ and $\{|N_G(s_h)\cap A|:h\in[3]\}=\{0,1,2\}$, then by employing the assumptions on both $S$ and $A$, one can observe  that $G[S\cup A\cup\{v\}]$ is one of the graphs illustrated in Fig.~\ref{nfig3}. 

\begin{figure}[ht!]
\begin{center}
\begin{tikzpicture}
\clip(-6.5,0) rectangle (7,3.5);
\draw (-6,1)-- (-5,1);
\draw (-5,1)-- (-5.45,2);
\draw (-5.45,2)-- (-6,1);
\draw (-5.45,2.98)-- (-6,1);
\draw (-5.45,2.98)-- (-5.45,2);
\draw (-5.45,2.98)-- (-4,2.37);
\draw (-5.45,2)-- (-4,2.37);
\draw (-5.45,2)-- (-4,1.52);
\draw (-6,1)-- (-4,2.37);
\draw (-3.99,1.01)-- (-2.99,1.01);
\draw (-2.99,1.01)-- (-3.44,2.01);
\draw (-3.44,2.01)-- (-3.99,1.01);
\draw (-3.44,2.99)-- (-3.99,1.01);
\draw (-3.44,2.99)-- (-3.44,2.01);
\draw (-3.44,2.99)-- (-1.99,2.38);
\draw (-3.44,2.01)-- (-1.99,2.38);
\draw (-3.44,2.01)-- (-1.99,1.53);
\draw (-2,1)-- (-0.81,1);
\draw (-0.81,1)-- (-1.26,2);
\draw (-1.26,2)-- (-2,1);
\draw (-1.26,2.98)-- (-2,1);
\draw (-1.26,2.98)-- (-1.26,2);
\draw (-1.26,2.98)-- (0.19,2.37);
\draw (-0.81,1)-- (0.19,1.52);
\draw (-0.81,1)-- (0.19,2.37);
\draw (-5.66,0.9) node[anchor=north west] {$R_1$};
\draw (-3.39,0.85) node[anchor=north west] {$R_2$};
\draw (-1.09,0.84) node[anchor=north west] {$R_3$};
\draw (-2.99,1.01)-- (-1.99,2.38);
\draw (-1.26,2)-- (0.19,2.37);
\draw (0.54,0.9) node[anchor=north west] {$R_4$};
\draw (2.82,0.85) node[anchor=north west] {$R_5$};
\draw (5.11,0.84) node[anchor=north west] {$R_6$};
\draw (-5.45,2.98)-- (-4,1.52);
\draw (-3.44,2.99)-- (-1.99,1.53);
\draw (-1.26,2.98)-- (0.19,1.52);
\draw (0.25,1.02)-- (1.25,1.02);
\draw (1.25,1.02)-- (0.8,2.02);
\draw (0.8,2.02)-- (0.25,1.02);
\draw (0.8,3)-- (0.25,1.02);
\draw (0.8,3)-- (0.8,2.02);
\draw (0.8,3)-- (2.25,2.39);
\draw (0.8,2.02)-- (2.25,2.39);
\draw (0.8,2.02)-- (2.25,1.54);
\draw (0.25,1.02)-- (2.25,2.39);
\draw (2.26,1.03)-- (3.26,1.03);
\draw (3.26,1.03)-- (2.81,2.03);
\draw (2.81,2.03)-- (2.26,1.03);
\draw (2.81,3.01)-- (2.26,1.03);
\draw (2.81,3.01)-- (2.81,2.03);
\draw (2.81,3.01)-- (4.26,2.4);
\draw (2.81,2.03)-- (4.26,2.4);
\draw (2.81,2.03)-- (4.26,1.55);
\draw (4.25,1.02)-- (5.44,1.01);
\draw (5.44,1.01)-- (4.98,2.02);
\draw (4.98,2.02)-- (4.25,1.02);
\draw (4.98,3)-- (4.25,1.02);
\draw (4.98,3)-- (4.98,2.02);
\draw (4.98,3)-- (6.44,2.39);
\draw (5.44,1.01)-- (6.44,1.53);
\draw (5.44,1.01)-- (6.44,2.39);
\draw (3.26,1.03)-- (4.26,2.4);
\draw (4.98,2.02)-- (6.44,2.39);
\draw (0.8,3)-- (2.25,1.54);
\draw (2.81,3.01)-- (4.26,1.55);
\draw (4.98,3)-- (6.44,1.53);
\draw (2.25,2.39)-- (2.25,1.54);
\draw (4.26,2.4)-- (4.26,1.55);
\draw (6.44,2.39)-- (6.44,1.53);
\begin{scriptsize}
\fill [color=black] (-6,1) circle (1.5pt);
\draw[color=black] (-6.2,1.11) node {$s_2$};
\fill [color=black] (-5,1) circle (1.5pt);
\draw[color=black] (-4.92,1.16) node {$s_3$};
\fill [color=black] (-5.45,2) circle (1.5pt);
\draw[color=black] (-5.26,2.16) node {$s_1$};
\fill [color=black] (-5.45,2.98) circle (1.5pt);
\draw[color=black] (-5.36,3.15) node {$v$};
\fill [color=black] (-4,2.37) circle (1.5pt);
\draw[color=black] (-3.91,2.54) node {$a_1$};
\fill [color=black] (-4,1.52) circle (1.5pt);
\draw[color=black] (-4.05,1.72) node {$a_2$};
\fill [color=black] (-3.99,1.01) circle (1.5pt);
\draw[color=black] (-4.2,1.15) node {$s_2$};
\fill [color=black] (-2.99,1.01) circle (1.5pt);
\draw[color=black] (-2.76,1.06) node {$s_3$};
\fill [color=black] (-3.44,2.01) circle (1.5pt);
\draw[color=black] (-3.29,2.24) node {$s_1$};
\fill [color=black] (-3.44,2.99) circle (1.5pt);
\draw[color=black] (-3.28,3.16) node {$v$};
\fill [color=black] (-1.99,2.38) circle (1.5pt);
\draw[color=black] (-1.83,2.55) node {$a_1$};
\fill [color=black] (-1.99,1.53) circle (1.5pt);
\draw[color=black] (-2,1.7) node {$a_2$};
\fill [color=black] (-2,1) circle (1.5pt);
\draw[color=black] (-2.2,1.11) node {$s_2$};
\fill [color=black] (-0.81,1) circle (1.5pt);
\draw[color=black] (-0.57,0.9) node {$s_3$};
\fill [color=black] (-1.26,2) circle (1.5pt);
\draw[color=black] (-1.12,2.2) node {$s_1$};
\fill [color=black] (-1.26,2.98) circle (1.5pt);
\draw[color=black] (-1.1,3.15) node {$v$};
\fill [color=black] (0.19,2.37) circle (1.5pt);
\draw[color=black] (0.27,2.59) node {$a_1$};
\fill [color=black] (0.19,1.52) circle (1.5pt);
\draw[color=black] (0.23,1.73) node {$a_2$};
\fill [color=black] (0.25,1.02) circle (1.5pt);
\draw[color=black] (0.1,1.13) node {$s_2$};
\fill [color=black] (1.25,1.02) circle (1.5pt);
\draw[color=black] (1.42,1.18) node {$s_3$};
\fill [color=black] (0.8,2.02) circle (1.5pt);
\draw[color=black] (0.96,2.19) node {$s_1$};
\fill [color=black] (0.8,3) circle (1.5pt);
\draw[color=black] (0.96,3.17) node {$v$};
\fill [color=black] (2.25,2.39) circle (1.5pt);
\draw[color=black] (2.41,2.56) node {$a_1$};
\fill [color=black] (2.25,1.54) circle (1.5pt);
\draw[color=black] (2.15,1.8) node {$a_2$};
\fill [color=black] (2.26,1.03) circle (1.5pt);
\draw[color=black] (2.05,1.17) node {$s_2$};
\fill [color=black] (3.26,1.03) circle (1.5pt);
\draw[color=black] (3.49,1.08) node {$s_3$};
\fill [color=black] (2.81,2.03) circle (1.5pt);
\draw[color=black] (2.96,2.26) node {$s_1$};
\fill [color=black] (2.81,3.01) circle (1.5pt);
\draw[color=black] (2.97,3.18) node {$v$};
\fill [color=black] (4.26,2.4) circle (1.5pt);
\draw[color=black] (4.42,2.57) node {$a_1$};
\fill [color=black] (4.26,1.55) circle (1.5pt);
\draw[color=black] (4.1,1.8) node {$a_2$};
\fill [color=black] (4.25,1.02) circle (1.5pt);
\draw[color=black] (4.0,1.13) node {$s_2$};
\fill [color=black] (5.44,1.01) circle (1.5pt);
\draw[color=black] (5.68,0.95) node {$s_3$};
\fill [color=black] (4.98,2.02) circle (1.5pt);
\draw[color=black] (5.14,2.21) node {$s_1$};
\fill [color=black] (4.98,3) circle (1.5pt);
\draw[color=black] (5.15,3.16) node {$v$};
\fill [color=black] (6.44,2.39) circle (1.5pt);
\draw[color=black] (6.53,2.6) node {$a_1$};
\fill [color=black] (6.44,1.53) circle (1.5pt);
\draw[color=black] (6.65,1.7) node {$a_2$};
\end{scriptsize}
\end{tikzpicture}
\caption{Graphs $R_1,\ldots, R_6$.}
\label{nfig3}
\end{center}
\end{figure}
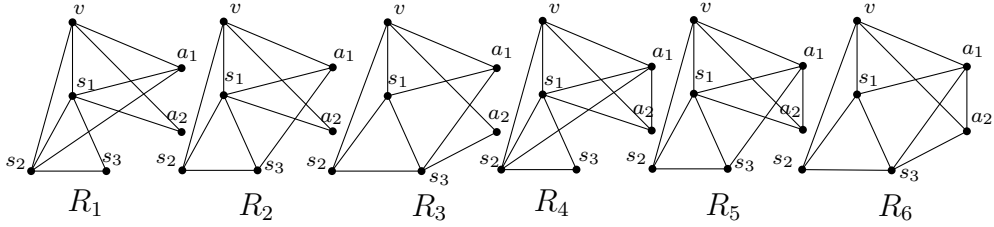

If $G[S\cup A\cup\{v\}]$ is either $R_1$ or $R_4$, then since $N_G(v)\cap\,(Q-S)=Q-S$ and $\omega(G)=k$, there is a  vertex such as $u$ in $Q-S$ that  $uv\in E(G)$ and $ua_1\not\in E(G)$. Thus, $V(G) - \{s_1, s_2, a_1, v\}$ is a local resolving set for $G$. If it is either $R_2$, $R_3$, or $R_6$, then $V(G) - \{s_1, s_2, a_2, v\}$ is a local resolving set for $G$. If it is $R_5$, then since $N_G(v)\cap\,(Q-S)=Q-S$ and $\omega(G)=k$, there is a  vertex such as $u$ in $Q-S$ that either $ua_1\not\in E(G)$ or $ua_2\not\in E(G)$ holds. So, if $ua_1\not\in E(G)$, then $V(G) - \{s_1, s_2, a_1, v\}$ is a local resolving set for $G$. If $ua_2\not\in E(G)$, then $V(G) - \{s_1, s_2, a_2, v\}$ is a local resolving set for $G$.

If $|N_G(v) \cap A| = 2$ and $\{|N_G(s_h)\cap A|:h\in[3]\}=\{1,2\}$, then by applying  the assumptions on both $S$ and $A$, one can see that $G[S\cup A\cup\{v\}]$ is one of the graphs illustrated in Fig.~\ref{nfig4}. 

\begin{figure}[ht!]
\begin{center}
\begin{tikzpicture}
\clip(-6.5,0) rectangle (4,3.5);
\draw (-6,1)-- (-5,1);
\draw (-5,1)-- (-5.45,2);
\draw (-5.45,2)-- (-6,1);
\draw (-5.45,2.98)-- (-6,1);
\draw (-5.45,2.98)-- (-5.45,2);
\draw (-5.45,2.98)-- (-4,2.37);
\draw (-5.45,2)-- (-4,2.37);
\draw (-5.45,2)-- (-4,1.52);
\draw (-6,1)-- (-4,2.37);
\draw (-3.71,1)-- (-2.52,1);
\draw (-2.52,1)-- (-2.97,2);
\draw (-2.97,2)-- (-3.71,1);
\draw (-2.97,2.98)-- (-3.71,1);
\draw (-2.97,2.98)-- (-2.97,2);
\draw (-2.97,2.98)-- (-1.52,2.37);
\draw (-2.52,1)-- (-1.52,1.52);
\draw (-2.52,1)-- (-1.52,2.37);
\draw (-5.66,0.9) node[anchor=north west] {$J_1$};
\draw (-3.12,0.88) node[anchor=north west] {$J_2$};
\draw (-0.65,0.84) node[anchor=north west] {$J_3$};
\draw (-2.97,2)-- (-1.52,2.37);
\draw (1.77,0.83) node[anchor=north west] {$J_4$};
\draw (-5.45,2.98)-- (-4,1.52);
\draw (-2.97,2.98)-- (-1.52,1.52);
\draw (-5,1)-- (-4,1.52);
\draw (-3.71,1)-- (-1.52,1.52);
\draw (-1.2,1.02)-- (-0.2,1.02);
\draw (-0.2,1.02)-- (-0.65,2.02);
\draw (-0.65,2.02)-- (-1.2,1.02);
\draw (-0.65,3)-- (-1.2,1.02);
\draw (-0.65,3)-- (-0.65,2.02);
\draw (-0.65,3)-- (0.8,2.39);
\draw (-0.65,2.02)-- (0.8,2.39);
\draw (-0.65,2.02)-- (0.8,1.54);
\draw (-1.2,1.02)-- (0.8,2.39);
\draw (1.09,1.02)-- (2.28,1.01);
\draw (2.28,1.01)-- (1.83,2.02);
\draw (1.83,2.02)-- (1.09,1.02);
\draw (1.83,3)-- (1.09,1.02);
\draw (1.83,3)-- (1.83,2.02);
\draw (1.83,3)-- (3.28,2.39);
\draw (2.28,1.01)-- (3.28,1.53);
\draw (2.28,1.01)-- (3.28,2.39);
\draw (1.83,2.02)-- (3.28,2.39);
\draw (-0.65,3)-- (0.8,1.54);
\draw (1.83,3)-- (3.28,1.53);
\draw (-0.2,1.02)-- (0.8,1.54);
\draw (1.09,1.02)-- (3.28,1.53);
\draw (0.8,2.39)-- (0.8,1.54);
\draw (3.28,2.39)-- (3.28,1.53);
\begin{scriptsize}
\fill [color=black] (-6,1) circle (1.5pt);
\draw[color=black] (-6.2,1.11) node {$s_2$};
\fill [color=black] (-5,1) circle (1.5pt);
\draw[color=black] (-4.92,1.16) node {$s_3$};
\fill [color=black] (-5.45,2) circle (1.5pt);
\draw[color=black] (-5.27,2.16) node {$s_1$};
\fill [color=black] (-5.45,2.98) circle (1.5pt);
\draw[color=black] (-5.36,3.15) node {$v$};
\fill [color=black] (-4,2.37) circle (1.5pt);
\draw[color=black] (-3.91,2.54) node {$a_1$};
\fill [color=black] (-4,1.52) circle (1.5pt);
\draw[color=black] (-3.92,1.68) node {$a_2$};
\fill [color=black] (-3.71,1) circle (1.5pt);
\draw[color=black] (-3.9,1.11) node {$s_2$};
\fill [color=black] (-2.52,1) circle (1.5pt);
\draw[color=black] (-2.27,0.95) node {$s_3$};
\fill [color=black] (-2.97,2) circle (1.5pt);
\draw[color=black] (-2.82,2.2) node {$s_1$};
\fill [color=black] (-2.97,2.98) circle (1.5pt);
\draw[color=black] (-2.81,3.15) node {$v$};
\fill [color=black] (-1.52,2.37) circle (1.5pt);
\draw[color=black] (-1.43,2.59) node {$a_1$};
\fill [color=black] (-1.52,1.52) circle (1.5pt);
\draw[color=black] (-1.48,1.73) node {$a_2$};
\fill [color=black] (-1.2,1.02) circle (1.5pt);
\draw[color=black] (-1.35,1.13) node {$s_2$};
\fill [color=black] (-0.2,1.02) circle (1.5pt);
\draw[color=black] (-0.1,1.25) node {$s_3$};
\fill [color=black] (-0.65,2.02) circle (1.5pt);
\draw[color=black] (-0.49,2.19) node {$s_1$};
\fill [color=black] (-0.65,3) circle (1.5pt);
\draw[color=black] (-0.49,3.17) node {$v$};
\fill [color=black] (0.8,2.39) circle (1.5pt);
\draw[color=black] (0.96,2.56) node {$a_1$};
\fill [color=black] (0.8,1.54) circle (1.5pt);
\draw[color=black] (0.95,1.71) node {$a_2$};
\fill [color=black] (1.09,1.02) circle (1.5pt);
\draw[color=black] (0.9,1.13) node {$s_2$};
\fill [color=black] (2.28,1.01) circle (1.5pt);
\draw[color=black] (2.52,0.95) node {$s_3$};
\fill [color=black] (1.83,2.02) circle (1.5pt);
\draw[color=black] (1.99,2.21) node {$s_1$};
\fill [color=black] (1.83,3) circle (1.5pt);
\draw[color=black] (2,3.16) node {$v$};
\fill [color=black] (3.28,2.39) circle (1.5pt);
\draw[color=black] (3.37,2.6) node {$a_1$};
\fill [color=black] (3.28,1.53) circle (1.5pt);
\draw[color=black] (3.45,1.7) node {$a_2$};
\end{scriptsize}
\end{tikzpicture}
\caption{Graphs $J_1, \ldots, J_4$.}
\label{nfig4}
\end{center}
\end{figure}
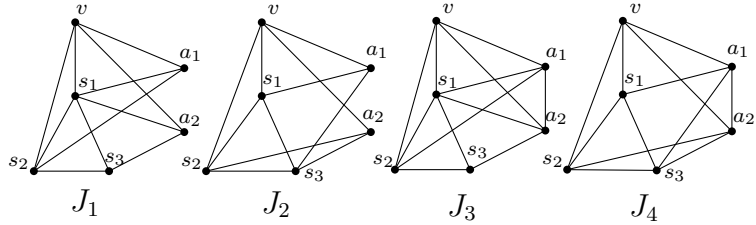

If $G[S\cup A\cup\{v\}]$ is either $J_1$ or $J_3$, then since $N_G(v)\cap\,(Q-S)=Q-S$ and $\omega(G)=k$, there is a  vertex such as $u$ in $Q-S$ that  $ua_1\not\in E(G)$. Thus, $V(G) - \{s_1, s_2, a_1, v\}$ is a local resolving set for $G$. If it is either $J_2$ or $J_4$, then $V(G) - \{s_1, s_2, s_3, v\}$ is a local resolving set for $G$.

\medskip\noindent
{\bf Case 3}: $|S| = 2$. \\
Let $S=\{s_1,s_2\}$ and assume without loss of generality that $|N_G(s_1)|\geq|N_G(s_2)|$. Since $\omega(G)=k$ and $|S|=2$, one can see that the following statements hold, otherwise we can increase the size of either $S$ or $Q$.

\begin{enumerate}
  \item[I.] $\big(N_G(s_1)\cap(V(G)-Q)\big)\cap\big(N_G(s_2)\cap(V(G)-Q)\big)=\emptyset$.
  \item[II.] For each $w\in Q-S$ and $i\in\{1,2\}$, $\big(N_G(s_i)\cap(V(G)-Q))\big)-\big(N_G(w)\cap(V(G)-Q))\big)=N_G(s_i)\cap(V(G)-Q))$ or $\emptyset$.
  \item[III.] If $N_G(s_1)\cap(V(G)-Q)\neq\emptyset$ and $N_G(s_2)\cap(V(G)-Q)\neq\emptyset$, then for each $w\in(Q-S)$, either $\big\{\big(N_G(s_1)\cap(V(G)-Q))\big)-\big(N_G(w)\cap(V(G)-Q))\big)=N_G(s_1)\cap(V(G)-Q))$ and $\big(N_G(s_2)\cap(V(G)-Q))\big)-\big(N_G(w)\cap(V(G)-Q))\big)=\emptyset\big\}$, or $\big\{\big(N_G(s_1)\cap(V(G)-Q))\big)-\big(N_G(w)\cap(V(G)-Q))\big)=\emptyset$ and $\big(N_G(s_2)\cap(V(G)-Q))\big)-\big(N_G(w)\cap(V(G)-Q))\big)=N_G(s_2)\cap(V(G)-Q))\big\}$.
  \item[IV.] For each $w\in(Q-S)$ and $x\in\big((V(G)-(N_G(s_1)\cup\,N_G(s_2))\big)$, $wx\not\in E(G)$.
\end{enumerate}

Now, we consider four sub-cases as follows. 

\medskip\noindent
{\bf Case 3.1}: For $i\in[2]$, there exist a subset $X$ of $N_G(s_i)\cap(V(G)-Q)$ such that $|X|=3$ and $|E(G[X])|\leq2$. \\
Suppose $X=\{x_1,x_2,x_3\}$. If $|E(G[X])|=0$, then by applying (I) we infer that the set $V(G)-(\{s_i\}\cup\,X)$ forms a local resolving set for $G$. If $E(G[X])=\{x_1x_2\}$ or $\{x_1x_2,x_2x_3\}$, then by employing both (II) and $\omega(G)=k$, we get that there is a vertex $w$ of $Q-S$ that is adjacent to no vertex of $X$. Thus, by applying (I), the set $V(G)-\{w,s_i,x_1,x_3\}$ is a  local resolving set for $G$.  The proof of other cases are similar and hence omitted. 

\medskip\noindent
{\bf Case 3.2}: $|N_G(s_1)\cap(V(G)-Q)|\geq2$ and $|N_G(s_2)\cap(V(G)-Q)|\geq1$. \\
Assume $a_1,a_2\in(N_G(s_1)\cap(V(G)-Q))$ and $b_1\in(N_G(s_2)\cap(V(G)-Q))$. If $a_1a_2\not\in E(G)$, then by using (I), the set $V(G)-\{s_1,a_1,a_2,b_1\}$ is a  local resolving set for $G$. If $a_1a_2\in E(G)$, then by applying (II), (IV), and $\omega(G)=k$,  there is a vertex $w$ of $Q-S$ that is adjacent to no vertex in the set $((V(G)-Q)-N_G(s_2))$. Now, if $wb_1\not\in E(G)$, then  $V(G)-\{w,s_1,a_1,b_1\}$  is a  local resolving set for $G$. If there exists $x\in((V(G)-Q)-N_G(s_2))$, such that  $xb_1\in 
 E(G)$, then by using (I), the set $V(G)-\{w,s_1,a_i,b_1\}$  is a  local resolving set for $G$, where $i\in [2]$ and $a_i\neq x$.  Otherwise, $N_G(b_1)\cap((V(G)-Q)-N_G(s_2))=\emptyset$ and by employing (II) and (III), there is no vertex $w'\in Q$ such that both $w'b_1$ and $\{w'a_1,w'a_2\}\subseteq  E(G)$ hold. In this case,  by using (I), both $V(G)-\{w,s_1,a_1,b_1\}$ and $V(G)-\{w,s_1,a_2,b_1\}$ are   local resolving sets for $G$.

\medskip\noindent
{\bf Case 3.3}:  $|N_G(s_1)\cap(V(G)-Q)|=|N_G(s_2)\cap(V(G)-Q)|=1$. \\
Assume that $N_G(s_1)\cap(V(G)-Q)=\{a_1\}$, $N_G(s_2)\cap(V(G)-Q)=\{b_1\}$, and let $A_1=N_G(a_1)\cap(V(G)-(Q\cup\{b_1\}))$, and $B_1=N_G(b_1)\cap(V(G)-(Q\cup\{a_1\}))$. If $Q-S\neq\emptyset$, $A_1\cup B_1 \neq \emptyset$, $w\in Q-S$, and $x\in A_1\cup B_1$, then by employing (III), either $(wa_1\in E(G)$ and $wb_1\not\in E(G))$ or $(wa_1\not\in E(G)$ and $wb_1\in E(G))$ holds. Without loss of generality, assume $wa_1\not\in E(G)$ and $wb_1\in E(G)$. In this case, by using (I) and (IV), we get that $V(G)-\{s_2,a_1,b_1,x\}$ is a local resolving set for $G$. If $A_1\cup\,B_1=\emptyset$, then by using (IV) and  the fact that  $G$ is connected, we have $\omega(G)=n-2$, but this is a contradiction with $\omega(G)\leq\,n-3$. If $Q-S=\emptyset$, then $\omega(G)=2$, and by employing Theorem~\ref{cth1} we have $\dim_l(G) \leq n-3$ and the equality holds if and only if $G \cong C_5$.

\medskip\noindent
{\bf Case 3.4}:  $|N_G(s_1)\cap(V(G)-Q)|\geq1$ and $|N_G(s_2)\cap(V(G)-Q)|=0$. \\
In this case, for $i\in[3]$, let $D_i$ be the set of vertices in $V(G)-Q$ at distance $i$ from $s_1$, that is, $D_i=\{v\in V(G)-Q:\ d_G(s_1,v)=i\}$. By applying  (IV), $\omega(G)\leq\,n(G)-3$, and $G$ is connected, hence  $|D_1|+|D_2|+|D_3|\geq 3$. If $D_i\neq\emptyset$ for $i\in[3]$, then by using (I) and (IV),  $V(G)-\{x_1,y_1,z_1,s_2\}$ is  local resolving set for $G$, where $x_1\in D_1$, $y_1\in D_2$, and $z_1\in D_3$. If $|D_1|\geq 2$,  $|D_2|\geq 1$, $x_1,x_2\in D_1$, $y_1\in D_2$, and $x_1x_2\not\in E(G)$, then by applying (I),  $V(G)-\{x_1,x_2,y_1,s_2\}$ is  local resolving set for $G$. If $|D_1|\geq2$,  $|D_2|\geq1$, $x_1,x_2\in D_1$, $y_1\in D_2$, and $x_1x_2\in E(G)$, then since $\omega(G)=k$, by using (II), there is a vertex $w$ of $Q-S$ that is adjacent to neither $x_1$ nor $x_2$. So, by applying (I) and (IV), both   $V(G)-\{x_1,y_1,s_1,w\}$ and $V(G)-\{x_2,y_1,s_1,w\}$ are  local resolving sets for $G$. If $|D_1|=1$,  $|D_2|\geq2$, $x_1\in D_1$, $y_1,y_2\in D_2$, and $y_1y_2\not\in E(G)$, then by applying (I), $V(G)-\{s_1,s_2,y_1,y_2\}$ is a  local resolving set for $G$. If $|D_1|=1$,  $|D_2|\geq2$, $x_1\in D_1$, $y_1,y_2\in D_2$, and $y_1y_2\in E(G)$, then $\omega(G)\geq3$ and $Q-S\neq\emptyset$. Thus, by applying (I) and (VI), $V(G)-\{s_1,s_2,x_1,y_1\}$ is a  local resolving set for $G$. If $|D_1|\geq3$,  $|D_2|=0$,  $x_1,x_2\in D_1$, and $x_1x_2\not\in E(G)$, then by employing Case 3.1,  $\dim_l(G)\leq\,n-4$. Otherwise,  $|D_1|\geq3$,  $|D_2|=0$,  and $G[D_1]\cong K_{|D_1|}$. In this case, by applying (II) and $\omega(G)\leq\,n(G)-3$, we infer that there are two positive integers $\lambda$ and $\mu$ such that $\lambda\geq\mu\geq2$ and $\lambda+\mu<n$, and $G\cong\,K_n^-(\lambda,\mu)$. Thus, Lemma \ref{lem:Kn-} gives the result.
\end{proof}

\begin{corollary}
If $G$ is a connected graph with $\omega(G) = n(G) - 3$, then 
$$n(G) - 8\le \dim_l(G) \le n(G) - 3\,.$$ 
Furthermore, both bounds are sharp.
\end{corollary}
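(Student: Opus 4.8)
The plan is to read both inequalities off results already available and then to exhibit graphs attaining each bound. For the upper bound, observe first that $\omega(G)\ge 1$ gives $n(G)\ge 4$, so $G$ has an edge and $\omega(G)\ge 2$; hence $\omega(G)=n(G)-3$ forces $n(G)\ge 5$, and Theorem~\ref{th1} applies directly, giving $\dim_l(G)\le n(G)-3$. For the lower bound, apply Theorem~\ref{thm:Okamoto-2}: here $n(G)-\omega(G)=3$, so its second term equals $n(G)-2^{3}=n(G)-8$, whence $\dim_l(G)\ge n(G)-8$.

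To see that the upper bound is sharp, the cycle $C_5$ already works: $n(C_5)=5$, $\omega(C_5)=2=n(C_5)-3$, and $\dim_l(C_5)=2$. More generally, for each $\lambda\ge 3$ and each $n\ge \lambda+4$ the graph $K_n^-(\lambda,3)$ has clique number $n-3$ (a largest clique is the part of size $\lambda$ together with all vertices lying outside the deleted $K_{\lambda,3}$, which gives $\lambda+(n-\lambda-3)=n-3$ vertices), and $\dim_l(K_n^-(\lambda,3))=n-3$ by Lemma~\ref{lem:Kn-}.

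For sharpness of the lower bound I would construct a graph $B$ on $11$ vertices: let $Q=\{v_0,\dots,v_7\}$ be a clique, add three vertices $w_1,w_2,w_3$, and join $v_i$ to $w_j$ exactly when the $j$-th binary digit of $i$ is $1$ (the $w_j$ may be made pairwise adjacent, or not; any choice keeping $B$ connected is fine). Every clique of $B$ other than $Q$ has fewer than eight vertices, since a clique containing $t\ge 1$ of the vertices $w_j$ is confined to those $w_j$ together with their $2^{3-t}\le 4$ common neighbours in $Q$; hence $\omega(B)=8=n(B)-3$, and Theorem~\ref{thm:Okamoto-2} gives $\dim_l(B)\ge 3$. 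Conversely, $\{w_1,w_2,w_3\}$ is a local resolving set for $B$: any two distinct vertices $v_i,v_{i'}$ of $Q$ are adjacent and differ in some binary digit $j$, so exactly one of them neighbours $w_j$, which forces $\{d_B(v_i,w_j),d_B(v_{i'},w_j)\}=\{1,2\}$. Hence $\dim_l(B)\le 3$, and together with the lower bound $\dim_l(B)=3=n(B)-8$.

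The only non-automatic points are the two short verifications for $B$ — that no clique outside $Q$ reaches eight vertices, and that $\{w_1,w_2,w_3\}$ distinguishes every adjacent pair in $Q$ — and both fall straight out of the binary labelling, so I foresee no real difficulty.
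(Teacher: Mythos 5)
Your proof is correct and follows essentially the same route as the paper: both inequalities are read off Theorem~\ref{th1} and Theorem~\ref{thm:Okamoto-2}, upper-bound sharpness comes from the extremal graphs of Theorem~\ref{th1}, and your binary-labelled graph $B$ is exactly the paper's graph $\Lambda$ (an $8$-clique whose vertices realize all eight subsets of three outside vertices), just described combinatorially instead of by a figure.
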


\begin{proof}
The bounds follows by combining Theorem~\ref{th1} with Theorem~\ref{thm:Okamoto-2}. Theorem~\ref{th1} also yields sharpness of the upper bound. To complete the proof, we need to demonstrate that the lower bound is also sharp. Let $\Lambda$ be the graph illustrated in Figure \ref{nnfig0}. 

\begin{figure}[ht!]
\begin{center}
\begin{tikzpicture}
\clip(-7,0) rectangle (-3,5.5);
\draw (-6,1)-- (-5,1);
\draw (-5,1)-- (-4.29,1.71);
\draw (-4.29,1.71)-- (-4.29,2.71);
\draw (-4.29,2.71)-- (-5,3.41);
\draw (-5,3.41)-- (-6,3.41);
\draw (-6,3.41)-- (-6.71,2.71);
\draw (-6.71,2.71)-- (-6.71,1.71);
\draw (-6.71,1.71)-- (-6,1);
\draw (-6.71,1.71)-- (-5,5);
\draw (-6.71,1.71)-- (-6,5);
\draw (-6.71,1.71)-- (-4,5);
\draw (-6.71,2.71)-- (-5,5);
\draw (-6.71,2.71)-- (-6,5);
\draw (-6,3.41)-- (-5,5);
\draw (-6,3.41)-- (-4,5);
\draw (-5,3.41)-- (-4,5);
\draw (-5,3.41)-- (-6,5);
\draw (-6,3.41)-- (-6.71,1.71);
\draw (-6,3.41)-- (-6,1);
\draw (-6,3.41)-- (-5,1);
\draw (-6,3.41)-- (-4.29,1.71);
\draw (-6,3.41)-- (-4.29,2.71);
\draw (-5,3.41)-- (-4.29,1.71);
\draw (-5,3.41)-- (-5,1);
\draw (-5,3.41)-- (-6,1);
\draw (-5,3.41)-- (-6.71,1.71);
\draw (-5,3.41)-- (-6.71,2.71);
\draw (-4.29,2.71)-- (-5,1);
\draw (-4.29,2.71)-- (-6,1);
\draw (-4.29,2.71)-- (-6.71,1.71);
\draw (-4.29,2.71)-- (-6.71,2.71);
\draw (-4.29,1.71)-- (-6,1);
\draw (-4.29,1.71)-- (-6.71,1.71);
\draw (-4.29,1.71)-- (-6.71,2.71);
\draw (-5,1)-- (-6.71,1.71);
\draw (-5,1)-- (-6.71,2.71);
\draw (-6,1)-- (-6.71,2.71);
\draw (-5.82,0.75) node[anchor=north west] {$\Lambda$};
\draw (-4.29,2.71)-- (-6,5);
\draw (-4.29,1.71)-- (-5,5);
\draw (-5,1)-- (-4,5);
\begin{scriptsize}
\fill [color=black] (-6,1) circle (1.5pt);
\draw[color=black] (-6.25,1.02) node {$v_1$};
\fill [color=black] (-5,1) circle (1.5pt);
\draw[color=black] (-4.7,1.02) node {$v_2$};
\fill [color=black] (-4.29,1.71) circle (1.5pt);
\draw[color=black] (-4.1,1.87) node {$v_3$};
\fill [color=black] (-4.29,2.71) circle (1.5pt);
\draw[color=black] (-4.21,2.87) node {$v_4$};
\fill [color=black] (-5,3.41) circle (1.5pt);
\draw[color=black] (-5.16,3.53) node {$v_5$};
\fill [color=black] (-6,3.41) circle (1.5pt);
\draw[color=black] (-6.05,3.6) node {$v_6$};
\fill [color=black] (-6.71,2.71) circle (1.5pt);
\draw[color=black] (-6.8,2.91) node {$v_7$};
\fill [color=black] (-6.71,1.71) circle (1.5pt);
\draw[color=black] (-6.89,1.85) node {$v_8$};
\fill [color=black] (-4,5) circle (1.5pt);
\draw[color=black] (-3.94,5.17) node {$u_1$};
\fill [color=black] (-5,5) circle (1.5pt);
\draw[color=black] (-4.93,5.17) node {$u_2$};
\fill [color=black] (-6,5) circle (1.5pt);
\draw[color=black] (-6,5.23) node {$u_3$};
\end{scriptsize}
\end{tikzpicture}
\caption{The graph $\Lambda$.}
\label{nnfig0}
\end{center}
\end{figure}
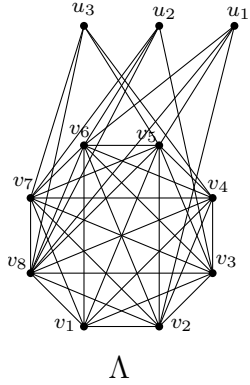

Let $\Upsilon$ be the set of graphs that can be obtained from $\Lambda$ by adding edges between the vertices $u_1$, $u_2$, and $u_3$. Consider the sets $V'=\{v_i:i\in[8]\}$ and $U = \{u_1, u_2, u_3\}$. By examining the structure of $\Lambda$, we observe that for any $i, j\in [8]$, $i\ne j$, and any graph $H\in \Upsilon$, we have $N_H(v_i) \cap U\neq\,N_H(v_j) \cap U$. Therefore, $U = V(H) - V'$ forms a local resolving set of cardinality $n(H) - 8$. This implies that $\dim_l(H) = n(H) - 8$.
Consequently, any connected graph $G$ with $\omega(G) = n(G) - 3$ that has an induced subgraph isomorphic to one of the elements in $\Upsilon$ will also have a local metric dimension of cardinality $n(G) - 8$, as desired.
\end{proof}

To characterize graphs $G$ with  $\dim_l(G) = n(G)-3$, we need the following two specific graphs. Let $\Gamma_1$ be a graph with vertex set $V(F)=\{v_i:\ i\in [6]\}$ and edge set $E(F)=\{v_iv_j:i,j\in\{1,2,3,4\}\}\cup\{v_1v_5,v_1v_6,v_2v_5,v_3v_6\}$. The graph $\Gamma_2$ is obtained from $\Gamma_1$ by adding the edge $v_5v_6$. see Fig.~\ref{fig1}. A graph $G$ is called a $\{\Gamma_1,\Gamma_2\}$-free if it has no induced subgraph isomorphic to either $\Gamma_1$ or $\Gamma_2$.

\begin{figure}[ht!]
\begin{center}
\begin{tikzpicture}
 [
thick,
acteur/.style={circle,fill=black,thick,inner sep=2pt,minimum size=0.1cm} ]
 		\node  (0) at (-2.25, -0.75) {$\Gamma_1$};
		\node  (1) at (-3, 1.5) [acteur,label={[label distance=-1mm]}]{};
		\node  (2) at (-3, -0) [acteur,label={[label distance=-1mm]}]{};
		\node  (3) at (-1.5, -0) [acteur,label={[label distance=-1mm]}]{};
		\node  (4) at (-1.5, 1.5)[acteur,label={[label distance=-1mm]}] {};
		\node  (5) at (-2.25, 2.25) [acteur,label={[label distance=-1mm]}]{};
		\node  (6) at (-0.5, 1.5) [acteur,label={[label distance=-1mm]}]{};
		\node  (7) at (-3.25, -0.25) {$v_4$};
		\node  (8) at (-3.25, 1.2) {$v_2$};
		\node  (9) at (-2.25, 2.5) {$v_5$};
		\node  (10) at (-1.25, 1.2) {$v_1$};
		\node  (11) at (-0.15, 1.5) {$v_6$};
		\node  (12) at (-1.25, -0.25) {$v_3$};
		\node  (13) at (1.5, 1.5) [acteur,label={[label distance=-1mm]}]{};
		\node  (14) at (1.25, 1.2) {$v_2$};
		\node  (15) at (1.25, -0.25) {$v_4$};
		\node  (16) at (4.3, 1.5) {$v_6$};
		\node  (17) at (4, 1.5) [acteur,label={[label distance=-1mm]}]{};
		\node  (18) at (3, -0) [acteur,label={[label distance=-1mm]}]{};
		\node  (19) at (2.25, -0.75) {$\Gamma_2$};
		\node  (20) at (1.5, -0) [acteur,label={[label distance=-1mm]}]{};
		\node  (21) at (3.25, 1.2) {$v_1$};
		\node  (22) at (2.25, 2.5) {$v_5$};
		\node  (23) at (3, 1.5) [acteur,label={[label distance=-1mm]}]{};
		\node  (24) at (2.25, 2.25) [acteur,label={[label distance=-1mm]}]{};
		\node  (25) at (3.25, -0.25) {$v_3$};
 		\draw (1) to (2);
		\draw (2) to (3);
		\draw (3) to (4);
		\draw (4) to (1);
		\draw (1) to (3);
		\draw (4) to (2);
		\draw (5) to (1);
		\draw (5) to (4);
		\draw (4) to (6);
		\draw (6) to (3);
		\draw (13) to (20);
		\draw (20) to (18);
		\draw (18) to (23);
		\draw (23) to (13);
		\draw (13) to (18);
		\draw (23) to (20);
		\draw (24) to (13);
		\draw (24) to (23);
		\draw (23) to (17);
		\draw (17) to (18);
 \draw (24) to (17);
\end{tikzpicture}
\caption{ The graphs $\Gamma_1$ and $\Gamma_2$.}
\label{fig1}
\end{center}
\end{figure}
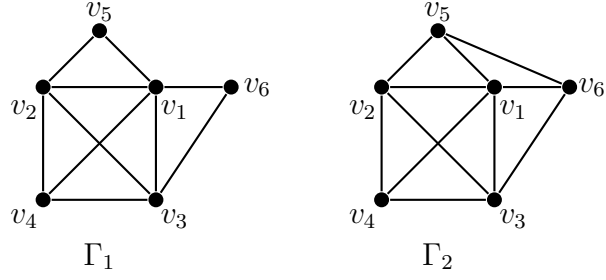

\begin{theorem}\label{th2}
If $G$ is a graph with $n(G) \geq 5$, then $\dim_l(G) = n(G)-3$ if and only if one of the following holds: 
\begin{enumerate}
\item[(i)] $\omega(G)=n(G)-2$ and $G$ is a $\{\Gamma_1,\Gamma_2\}$-free,
\item[(ii)] $G\cong C_5$, 
\item[(iii)] $G \cong K_{n(G)}^-(\lambda,\mu)$, $\lambda \geq \mu \geq 2$, $\lambda+\mu < n(G)$.
\end{enumerate}
\end{theorem}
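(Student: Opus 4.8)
\section*{Proof proposal}

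The plan is to split on the value of $\omega(G)$ and reduce everything to the case $\omega(G)=n(G)-2$; write $n=n(G)$, and recall $G$ is connected. If $\omega(G)=n$, then $G\cong K_n$ and $\dim_l(G)=n-1$ by Theorem~\ref{cth0}(I); if $\omega(G)=n-1$, then $\dim_l(G)=n-2$ by Theorem~\ref{cth0}(II); in both cases $\dim_l(G)\neq n-3$ and, since $n\geq5$, $G$ lies in none of (i)--(iii) (a $K_n^-(\lambda,\mu)$ with $\mu\geq2$ has $\omega=n-\mu\leq n-2$, and $\omega(C_5)=2$). If $\omega(G)\leq n-3$, then Theorem~\ref{th1} states exactly that $\dim_l(G)=n-3$ iff $G\cong C_5$ or $G\cong K_n^-(\lambda,\mu)$ with $\lambda\geq\mu\geq2$, $\lambda+\mu<n$ (here necessarily $\mu\geq3$), i.e.\ alternative (ii) or (iii); for the remaining members of (iii), those with $\mu=2$ (hence $\omega=n-2$), Lemma~\ref{lem:Kn-} still gives $\dim_l=n-3$, and they will also be seen to satisfy (i). It thus remains to prove: if $\omega(G)=n-2$, then $\dim_l(G)=n-3$ iff $G$ is $\{\Gamma_1,\Gamma_2\}$-free.

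So assume $\omega(G)=n-2$. Since $\omega(G)\notin\{n-1,n\}$, the trivial bound $\dim_l(G)\leq n-1$ together with Theorem~\ref{cth0}(I),(II) yields $\dim_l(G)\leq n-3$, while Theorem~\ref{thm:Okamoto-2} yields $\dim_l(G)\geq n-2^{\,n-\omega(G)}=n-4$; thus $\dim_l(G)\in\{n-4,n-3\}$, and it suffices to characterize when $\dim_l(G)=n-4$. Suppose first that $G$ has an induced subgraph on six vertices isomorphic to $\Gamma_1$ or $\Gamma_2$, with $\{p_1,p_2,p_3,p_4\}$ the $K_4$, $p_5$ adjacent within the six exactly to $p_1,p_2$, and $p_6$ adjacent exactly to $p_1,p_3$. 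As the subgraph is induced, in $G$ we get $d_G(p_5,p_1)=d_G(p_5,p_2)=1$, $d_G(p_5,p_3)=d_G(p_5,p_4)=2$ (the $2$'s via the path through $p_1$), and symmetrically for $p_6$; hence $\{p_5,p_6\}$ distinguishes every pair inside $\{p_1,p_2,p_3,p_4\}$, so $V(G)\setminus\{p_1,p_2,p_3,p_4\}$ is a local resolving set and $\dim_l(G)=n-4\neq n-3$.

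Conversely, suppose $\dim_l(G)=n-4$; fix a local resolving set $W$ with $|W|=n-4$ and set $T=V(G)\setminus W$ (so $|T|=4$), fix a maximum clique $Q$ with $|V(Q)|=n-2$, and put $V(G)\setminus V(Q)=\{x,y\}$. The decisive observation is that a vertex of $V(Q)\cap W$ is at distance $1$ from every vertex of $V(Q)$, hence distinguishes no two of them; so any adjacent pair contained in $V(Q)\cap T$ must be distinguished by $x$ or by $y$. As $|T|=4$ and at most two vertices of $T$ lie outside $V(Q)$, we have $|T\cap V(Q)|\in\{2,3,4\}$. If $|T\cap V(Q)|=2$, then $x,y\in T$, so no vertex of $W$ distinguishes the adjacent pair $T\cap V(Q)$ --- impossible. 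If $|T\cap V(Q)|=3$, then exactly one of $x,y$, say $y$, lies in $W$, and $y$ alone would have to distinguish all three pairs of the triangle $T\cap V(Q)$; but $d_G(y,\cdot)$ takes at most two values on $V(Q)$ (in $\{1,2\}$ if $y$ has a neighbour in $Q$, in $\{2,3\}$ otherwise, the remaining possibility --- $\{x,y\}$ cut off from $Q$ --- excluded by connectivity), so some pair is missed --- impossible. Therefore $T\subseteq V(Q)$: $T$ induces a $K_4$ and all six of its pairs are distinguished by $x,y\in W$. A further short argument, again using connectivity, shows each of $x,y$ must have a neighbour in $Q$ (otherwise its distinguishing power on $V(Q)$ merely duplicates the other's, leaving at most a $2$-partition of $T$, too coarse for six pairs), so $d_G(z,t)\in\{1,2\}$ for $z\in\{x,y\}$ and $t\in V(Q)$, and ``$x$ or $y$ distinguishes $t,t'$'' means ``$t,t'$ differ in their adjacency to $x$ or to $y$''. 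Hence the adjacency pattern of $t\in T$ to the pair $(x,y)$ is injective on $T$, thus a bijection onto the four patterns, and relabelling $T$ accordingly shows $G[T\cup\{x,y\}]\cong\Gamma_1$ when $x\not\sim y$ and $\cong\Gamma_2$ when $x\sim y$; so $G$ is not $\{\Gamma_1,\Gamma_2\}$-free. This completes the case $\omega(G)=n-2$ and, with the first paragraph, the theorem. (It also shows the $\mu=2$ graphs $K_n^-(\lambda,2)$ of (iii): having $\omega=n-2$ and $\dim_l=n-3$, they must be $\{\Gamma_1,\Gamma_2\}$-free, so (i) subsumes them.)

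The step I expect to be the main obstacle is this converse: one must confirm that the case split on $|T\cap V(Q)|$ is exhaustive and make the two ``collapse'' sub-arguments fully rigorous --- that $x$ and $y$ each have a neighbour in $Q$, and that $d_G(y,\cdot)$ cannot take three distinct values on a clique --- in particular dealing carefully with the degenerate configurations (a vertex of $\{x,y\}$ with no neighbour in $Q$, or $\{x,y\}$ disconnected from $Q$) that are excluded only by the connectivity of $G$.
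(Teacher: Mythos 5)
Your proposal is correct and follows the same overall strategy as the paper's proof --- reduce to the case $\omega(G)=n(G)-2$ and show that there $\dim_l(G)=n(G)-4$ holds exactly when $G$ contains an induced $\Gamma_1$ or $\Gamma_2$ --- but it streamlines two steps in a way worth recording. First, where the paper establishes $\dim_l(G)\le n(G)-3$ by explicitly exhibiting a local resolving set of that size (a case analysis on whether the two vertices $u,v$ outside a maximum clique are adjacent), you get the same bound for free from Theorem~\ref{cth0}(I),(II), and combine it with the lower bound $n(G)-2^{2}=n(G)-4$ from Theorem~\ref{thm:Okamoto-2} to sandwich $\dim_l(G)\in\{n(G)-4,\,n(G)-3\}$; this renders the paper's explicit construction unnecessary. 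Second, for the decisive implication that $\dim_l(G)=n(G)-4$ forces an induced $\Gamma_1$ or $\Gamma_2$, the paper argues the contrapositive and, in the key case $|X\cap Q|=4$, merely asserts that $\{\Gamma_1,\Gamma_2\}$-freeness leaves some adjacent pair undistinguished; your version supplies the content behind that assertion --- a vertex of $W\cap V(Q)$ distinguishes no pair of the clique, each of $x,y$ must have a neighbour in $Q$ (else its distance function on $V(Q)$ duplicates the other's and yields only a $2$-partition of the four vertices, which cannot separate six pairs), hence the four vertices of $T$ must realize all four adjacency patterns with respect to $(x,y)$, which is precisely an induced $\Gamma_1$ or $\Gamma_2$ according as $xy\notin E(G)$ or $xy\in E(G)$. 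You also handle explicitly the degenerate configurations excluded only by connectivity, and you are more careful than the paper about the overlap between cases (i) and (iii) when $\mu=2$. The paper's route has the minor advantage of being self-contained (it does not invoke Theorem~\ref{thm:Okamoto-2}), but your argument is tighter and fills a genuine level-of-detail gap in the published treatment of the case $|X\cap Q|=4$.
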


\begin{proof}
Let $G$ be a graph with $n = n(G) \geq5$. If $ \omega(G) \leq n - 3$, then by employing Theorem \ref{th1}, we have $\dim_l(G) = n - 3$ if and only if one of (ii) and (iii) is fulfilled. To complete the proof, it is sufficient to show that if $\omega(G) \geq n - 2$, then $\dim_l(G) = n - 3$ if and only if $\omega(G) = n - 2$ and $G$ is $\{\Gamma_1, \Gamma_2\}$-free. 
We will prove this in two parts.

Assume first that $\omega(G) = n-2$ and $G$ is $\{\Gamma_1, \Gamma_2\}$-free. Let $Q$ a largest clique of $G$, and let its vertices be $w_1, \ldots, w_{n-2}$. First, we show that $\dim_l(G) \geq n-3$. It is enough to prove that for any subset $X$ of $V(G)$ with $|X|=4$, there exist adjacent vertices $x_1$ and $x_2$ in $X$ such that $d_G(x_1,a)=d_G(x_2,a)$ for each  $a\in(V(G)-X)$. To prove this, observe first that $|X\cap Q|\in\{2,3,4\}$. If $|X\cap Q|=2$ and $X\cap Q=\{w_i,w_j\}$ for some $i,j\in [n-2]$, then $w_i$ and $w_j$ satisfy the condition. If $|X\cap Q|=3$ and $X\cap Q=\{w_i,w_j,w_k\}$ for some $i,j,k\in [n-2]$, then $G[\{w_i,w_j,w_k\}]\cong K_3$. In this case we may without loss of generality assume that $X\cap \{u,v\}=u$. Then at least two members of $\{w_i,w_j,w_k\}$ are either adjacent or non-adjacent with $v$, thus at least one pair of $w_i$, $w_j$, and $w_k$ satisfies the conditions. If $|X\cap Q|=4$ and $X\cap Q=\{w_i,w_j,w_k,w_l\}$ for some $i,j,k,l\in [n-2]$, then $G[\{w_i,w_j,w_k,w_l\}]\cong K_4$. In this case, since $G$ is $\{\Gamma_1, \Gamma_2\}$-free, we infer that at least one pair of $w_i$, $w_j$, $w_k$, and $w_l$ satisfies the conditions. So, $\dim_l(G)\geq n-3$.

We next show that $\dim_l(G)\leq n-3$. Let $V(G)-Q=\{u,v\}$ and consider two cases. Assume first that $uv\notin E(G)$. In this case, since $\omega(G)=n-2$, for any $i\in [n-2]$, both $(N_G(w_i)\cap Q)-N_G(u)\neq\emptyset$ when $uw_i \in E(G)$ and $(N_G(w_i)\cap Q)-N_G(v)\neq\emptyset$ when $vw_i\in E(G)$ hold. Therefore, for any $i\in [n-2]$, the set $V(G)-\{u,v,w_i\}$ is a local resolving set for $G$. So, $\dim_l(G)\leq n-3$. Assume second that $uv\in E(G)$. In this case, based on the explanations from the first case, if $x$ is a member of either $(N_G(u)\cap Q)-N_G(v)$ or $(N_G(v)\cap Q)-N_G(u)$, then for some $i\in [n-2]$ with $w_i\neq x$, the set $V(G)-\{u,v,w_i\}$ is a local resolving set for $G$. So, suppose $X=N_G(u)\cap Q=N_G(v)\cap Q$. Since $\omega(G)=n-2$, $|Q-X|\geq 2$. Thus, if $x\in X$ and $w\in (Q-X)$, then one can observe that both $V(G)-\{x,w,u\}$ and $V(G)-\{x,w,v\}$ are local resolving sets for $G$. Thus, in case (b), the inequality $\dim_l(G)\leq n-3$ also holds.

\medskip
We have thus proved that if $\omega(G)=n-2$ and $G$ is a $\{\Gamma_1, \Gamma_2\}$-free, then $\dim_l(G)=n-3$. Conversely, assume that $\dim_l(G)=n-3$. Then Theorem \ref{cth0} implies that $\omega(G) \le n-2$. If $\omega(G) \le n-3$, then by Theorem~\ref{th1} we get that $\dim_l(G)=n-3$ if and only if  $G$ is either $C_5$ or $K_n^{-}(\lambda, \mu)$, where $\lambda \geq \mu \geq 2$ and $\lambda+\mu<n$. To complete the argument we thus need to prove that if $\omega(G)=n-2$ and $\dim_l(G)=n-3$, then $G$ is $\{\Gamma_1, \Gamma_2\}$-free.

To demonstrate this, let's assume, for the sake of contradiction, that $\omega(G)=n-2$, $\dim_l(G)=n-3$, and that $G$ contains an induced subgraph $H$ that is isomorphic to either $\Gamma_1$ or $\Gamma_2$. 
Let $S$ be the set of vertices of $H$ corresponding to the vertices $v_1$, $v_2$, $v_3$, and $v_4$. In this scenario, by examining the structure of $\Gamma_1$ and $\Gamma_2$, we can conclude that $V(G) - S$ serves as a local resolving set for $G$. This indicates that $\dim_l(G) \leq n - 4$, which contradicts our earlier assumption that $\dim_l(G) = n - 3$. 
Therefore, if $\omega(G) = n - 2$ and $\dim_l(G) = n - 3$, it follows that $G$ must be $\{\Gamma_1, \Gamma_2\}$-free, as required.
\end{proof}

\begin{corollary}
If $G$ is a graph with $n(G)\ge 5$ and $\omega(G)=n(G)-2$, then
$$n(G)-4\leq\dim_l(G)\leq n(G)-3\,,$$
where the right equality holds if and only if $G$ is $\{\Gamma_1,\Gamma_2\}$-free, and the the left equality holds if and only if $G$ contains an induced subgraph isomorphic to $\Gamma_1$ or $\Gamma_2$.
\end{corollary}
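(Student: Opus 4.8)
\section*{Proof proposal}

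The plan is to read the corollary off from Theorem~\ref{th2} together with Theorem~\ref{thm:Okamoto-2}, the only non-routine point being a small structural observation needed to collapse the three cases of Theorem~\ref{th2} under the hypothesis $\omega(G)=n(G)-2$. Write $n=n(G)$ throughout. First I would settle the two inequalities. The lower bound is immediate from Theorem~\ref{thm:Okamoto-2}: since $n-\omega(G)=2$, that theorem gives $\dim_l(G)\ge n-2^{2}=n-4$. For the upper bound, recall that $\dim_l(G)\le n-1$ for every connected graph (the set of all but one vertex is vacuously a local resolving set), while by Theorem~\ref{cth0} the value $n-1$ would force $G\cong K_n$ and the value $n-2$ would force $\omega(G)=n-1$; both contradict $\omega(G)=n-2$, so $\dim_l(G)\le n-3$. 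This establishes the displayed bounds.

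Next I would identify exactly when the upper bound is attained. By Theorem~\ref{th2}, $\dim_l(G)=n-3$ holds if and only if one of conditions (i)--(iii) there is satisfied. Under $\omega(G)=n-2$ I would argue that (ii) cannot occur, since $\omega(C_5)=2\neq 3=n(C_5)-2$, and that (iii) is already subsumed by (i): indeed $\omega(K_n^-(\lambda,\mu))=n-\mu$, so $\omega(G)=n-2$ forces $\mu=2$, and $K_n^-(\lambda,2)$ is $\{\Gamma_1,\Gamma_2\}$-free. For the latter claim I would note that the non-edges of $K_n^-(\lambda,2)$ induce a complete bipartite graph $K_{\lambda,2}$, so $\alpha(K_n^-(\lambda,2))=2$; since $\Gamma_1$ contains the independent set $\{v_4,v_5,v_6\}$, $\Gamma_1$ cannot appear as an induced subgraph, and a short case check on which side of the bipartition $\{L,M\}$ the vertex $v_4$ (the unique vertex having both $v_5$ and $v_6$ as non-neighbours) lies rules out $\Gamma_2$ as well. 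Hence, under $\omega(G)=n-2$, the equality $\dim_l(G)=n-3$ holds precisely when condition (i) of Theorem~\ref{th2} holds, that is, precisely when $G$ is $\{\Gamma_1,\Gamma_2\}$-free; this is the right-equality statement of the corollary.

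Finally, the left-equality statement follows at once: since $n-4\le\dim_l(G)\le n-3$, we have $\dim_l(G)=n-4$ if and only if $\dim_l(G)\neq n-3$, which by the previous paragraph happens exactly when $G$ is not $\{\Gamma_1,\Gamma_2\}$-free, i.e.\ exactly when $G$ contains an induced subgraph isomorphic to $\Gamma_1$ or $\Gamma_2$. I expect the only step that is not pure bookkeeping to be the verification that $K_n^-(\lambda,2)$ omits both $\Gamma_1$ and $\Gamma_2$ as induced subgraphs; every other step is an immediate consequence of the already-proved Theorems~\ref{cth0}, \ref{thm:Okamoto-2}, and~\ref{th2}.
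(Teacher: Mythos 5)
Your proposal is correct and follows essentially the same route as the paper: the two bounds are obtained from Theorem~\ref{cth0} and Theorem~\ref{thm:Okamoto-2}, and both equality characterizations are read off from Theorem~\ref{th2}. Your extra verification that, under $\omega(G)=n(G)-2$, case (ii) of Theorem~\ref{th2} is impossible and case (iii) forces $\mu=2$ with $K_{n}^{-}(\lambda,2)$ being $\{\Gamma_1,\Gamma_2\}$-free is a small point the paper leaves implicit, and your argument for it (via $\alpha(K_n^-(\lambda,2))=2$ and the placement of $v_4$) is sound.
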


\begin{proof}
Let $G$ is a graph with $n(G)\ge 5$ and $\omega(G) = n(G) - 2$.  Theorem~\ref{cth0} implies that $\dim_l(G)\leq n(G)-3$, while Theorem~\ref{thm:Okamoto-2} gives $\dim_l(G) \ge n(G) - 4$. 
Moreover, Theorem~\ref{th2} yields that $\dim_l(G) = n(G)-3$ holds if and only if $G$ is $\{\Gamma_1,\Gamma_2\}$-free. Consequently, $\dim_l(G) = n(G)-4$ holds if and only if $G$ contains an induced subgraph isomorphic to $\Gamma_1$ or $\Gamma_2$.
\end{proof}

\section{On a conjecture and a problem}
\label{sec:conjectures}

In this section, we focus on a conjecture and a problem from two earlier papers which relate the local metric dimension and the clique number. We  start with the following: 

\begin{conjecture} {\rm \cite[Conjecture 2]{Ghalavand1}}
\label{con2}
If $G$ is a graph with $n(G) \geq \omega(G)+1 \geq 4$, then 
\[\dim_l(G)\leq\left(\frac{\omega(G)-2}{\omega(G)-1}\right)n(G)\,.\]
\end{conjecture}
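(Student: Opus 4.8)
We sketch an approach to Conjecture~\ref{con2}, treating first the three ``boundary'' regimes $\omega(G)\in\{n(G)-1,n(G)-2,n(G)-3\}$, where it follows from the results of Section~\ref{sec:omega} together with an elementary inequality, and then the remaining range. Set $n=n(G)$ and $\omega=\omega(G)$. If $\omega=n-1$, then $\dim_l(G)=n-2$ by Theorem~\ref{cth0}(II), and $n-2\le\left(\tfrac{n-3}{n-2}\right)n$ is equivalent to $n\ge 4$, which holds since $\omega\ge 3$ forces $n\ge 4$. If $\omega=n-2$, then $\dim_l(G)\le n-3$ (by Theorem~\ref{cth0}, since $G\not\cong K_n$ and $\omega(G)\ne n-1$), and $n-3\le\left(\tfrac{n-4}{n-3}\right)n$ is equivalent to $n\ge 5$, again implied by $\omega\ge 3$. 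If $\omega=n-3$, then $\dim_l(G)\le n-3$ by Theorem~\ref{th1}, and $n-3\le\left(\tfrac{n-5}{n-4}\right)n$ is equivalent to $n\ge 6$, once more implied by $\omega\ge 3$. Thus the conjecture holds in all three of these regimes.

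For the remaining range I would argue by induction, say on $n(G)$ with $\omega(G)$ fixed, or jointly on $n(G)-\omega(G)$. Using Lemma~\ref{lem:twins} one first reduces to the twin-free case: if $G$ has a true-twin class of size at least $2$, delete one of its vertices to obtain $G'$ with $n(G')=n(G)-1$, $\omega(G')\in\{\omega,\omega-1\}$, and $\dim_l(G')\ge\dim_l(G)-1$, and then propagate the estimate back to $G$. In the twin-free core, the mechanism to adapt is the one driving the proof of Theorem~\ref{th1}: fix a maximum clique $Q$, sort the vertices of $G$ by their trace on $Q$ together with a short ``resolving window'', and exhibit a vertex set $T$ with $|T|\ge n(G)/(\omega-1)$ such that every edge inside $T$ is distinguished by a vertex outside $T$; then $V(G)\setminus T$ is a local resolving set, giving $\dim_l(G)\le n(G)-n(G)/(\omega-1)=\left(\tfrac{\omega-2}{\omega-1}\right)n(G)$. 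When $T$ can be chosen independent this is automatic, so a natural sub-goal is a ``near-independent transversal'' lemma: among $\omega-1$ suitably chosen parts partitioning $V(G)$, at least one part has all of its induced edges distinguishable from outside.

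The step I expect to be the main obstacle is exactly this last one, in the range of bounded $\omega(G)$ and unbounded $n(G)$. The twin and clique-deletion bookkeeping alone does not reach the factor $\tfrac{\omega-2}{\omega-1}$, and the independent-set shortcut fails in general, since there are dense $K_{\omega+1}$-free graphs with independence number as small as $2$, for which $n(G)-\alpha(G)$ vastly exceeds $\left(\tfrac{\omega-2}{\omega-1}\right)n(G)$. Bridging this gap appears to demand a genuinely new structural ingredient---most plausibly a partition of $V(G)$ into $\omega-1$ parts of controlled clique number, feeding a recursion that invokes Theorem~\ref{cth1} at the bottom level---rather than a refinement of the arguments of Section~\ref{sec:omega}.
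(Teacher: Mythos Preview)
The statement you were asked to address is a \emph{conjecture}; the paper does not prove it in general. What the paper does prove (in the Corollary immediately following Conjecture~\ref{con2}) is exactly the partial result you establish in your first paragraph: the three regimes $\omega(G)\in\{n(G)-1,n(G)-2,n(G)-3\}$. Your argument for these cases is correct and essentially identical to the paper's---both invoke Theorems~\ref{cth0} and~\ref{th1} to bound $\dim_l(G)$ and then verify the corresponding arithmetic inequality in $n$.

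The remainder of your write-up is, as you candidly acknowledge, not a proof but a programme. The twin-deletion reduction is fine as far as it goes (deleting one of two true twins does give $\dim_l(G)\le\dim_l(G')+1$), but your own bookkeeping already shows the difficulty: when $\omega(G')=\omega(G)-1$ the inductive bound you would inherit is $\tfrac{\omega-3}{\omega-2}(n-1)+1$, and comparing this to $\tfrac{\omega-2}{\omega-1}n$ is not automatic. More seriously, the ``near-independent transversal'' step you identify as the crux is not supplied, and you correctly observe that dense $K_{\omega+1}$-free graphs with small independence number block the naive approach. So beyond the three boundary regimes there is no proof here, and none in the paper either: Conjecture~\ref{con2} remains open for $\omega(G)\le n(G)-4$.
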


It is demonstrated in~\cite{Ghalavand1} that if Conjecture~\ref{con2} is true, then the bound is asymptotically best possible. By Theorem~\ref{cth0}, Conjecture~\ref{con2} holds for graphs with clique number $\omega(G) = n(G)-1$, as $\left( \frac{n(G)-3}{n(G)-2} \right)n(G) - (n(G)-2) = \frac{n(G)-4}{n(G)-2} \geq 0$, where $n(G) \geq 4$. The following corollary greatly extends this results. 

\begin{corollary}
If $G$ is a graph with $\omega(G)\in\{n(G)-1,n(G)-2,n(G)-3\}$ and $n(G)\geq \omega(G)+1 \geq 4$, then 
\[\dim_l(G)\leq\left(\frac{\omega(G)-2}{\omega(G)-1}\right)n(G).\]
\end{corollary}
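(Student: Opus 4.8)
The plan is to verify the inequality $\dim_l(G)\leq\left(\frac{\omega(G)-2}{\omega(G)-1}\right)n(G)$ separately in the three cases $\omega(G)=n(G)-1$, $\omega(G)=n(G)-2$, and $\omega(G)=n(G)-3$, using in each case the sharp upper bounds on $\dim_l(G)$ established earlier in the paper together with elementary algebra. Write $n=n(G)$ and $\omega=\omega(G)$ throughout. In every case the strategy is the same: we have an absolute upper bound $\dim_l(G)\le n-c$ for the appropriate constant $c\in\{2,3\}$, so it suffices to check that $n-c\le\frac{\omega-2}{\omega-1}n$, equivalently that $c(\omega-1)\ge n$, using the relation between $\omega$ and $n$ available in that case.

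First, for $\omega=n-1$ we invoke Theorem~\ref{cth0}(II), which gives $\dim_l(G)=n-2$; then $\frac{\omega-2}{\omega-1}n-(n-2)=\frac{n-3}{n-2}n-(n-2)=\frac{n-4}{n-2}$, which is nonnegative since $n\ge \omega+1\ge 4$. (This is the observation already recorded in the paper, included here for completeness.) Second, for $\omega=n-2$ the right equality of the corollary following Theorem~\ref{th2} gives $\dim_l(G)\le n-3$, so it suffices to check $n-3\le\frac{n-4}{n-3}n$, i.e.\ $(n-3)^2\le n(n-4)$, i.e.\ $n^2-6n+9\le n^2-4n$, i.e.\ $9\le 2n$, which holds whenever $n\ge 5$; and $n\ge 5$ is forced here because $n=\omega+2\ge 4+2=6$ actually, since $\omega\ge 3$ (the hypothesis $\omega\ge 3$ follows from $n\ge\omega+1$ together with $\omega=n-2$ giving $n-2\ge 3$, i.e.\ $n\ge 5$). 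Third, for $\omega=n-3$ we apply Theorem~\ref{th1}, which yields $\dim_l(G)\le n-3$; then it suffices to check $n-3\le\frac{n-5}{n-4}n$, i.e.\ $(n-3)(n-4)\le n(n-5)$, i.e.\ $n^2-7n+12\le n^2-5n$, i.e.\ $12\le 2n$, i.e.\ $n\ge 6$, which holds since $\omega=n-3\ge 3$ forces $n\ge 6$.

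I do not anticipate a genuine obstacle here: the corollary is an immediate algebraic consequence of the three structural theorems already proved, and the only care needed is to confirm in each case that the side condition $n\ge \omega+1\ge 4$ (equivalently $\omega\ge 3$) suffices to make the relevant quadratic inequality hold, which it does with room to spare. The one point worth stating explicitly is that in the cases $\omega=n-2$ and $\omega=n-3$ we should either use the sharp bound $\dim_l(G)\le n-3$ directly or, if one prefers a uniform treatment, note that in all three cases $c(\omega-1)\ge n$ reduces to $\omega\ge 1+\frac{n}{c}$, which for $c=2,\omega=n-1$ needs $n\ge 4$, for $c=3,\omega=n-2$ needs $n\ge \tfrac92$ i.e.\ $n\ge 5$, and for $c=3,\omega=n-3$ needs $n\ge 6$ — and each of these is implied by the hypothesis. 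A short proof would therefore just state $\dim_l(G)\le n-2$ or $n-3$ (citing Theorem~\ref{cth0}, Theorem~\ref{th1}, or the corollary to Theorem~\ref{th2}, as appropriate), perform the one-line rearrangement, and conclude.
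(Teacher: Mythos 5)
Your proof is correct and follows essentially the same route as the paper: in each of the three cases you apply the corresponding upper bound ($\dim_l(G)\le n-2$ for $\omega=n-1$ via Theorem~\ref{cth0}, and $\dim_l(G)\le n-3$ for $\omega\in\{n-2,n-3\}$ via Theorem~\ref{th1} and the consequence of Theorem~\ref{th2}) and verify the resulting elementary inequality, exactly as the paper does. The only blemish is the stray claim ``$n=\omega+2\ge 4+2=6$'' in the $\omega=n-2$ case, which should read $n\ge 3+2=5$; your parenthetical derivation of $n\ge 5$ is the correct one and is all that is needed there.
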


\begin{proof}
Set $n=n(G)$ and $k=\omega(G)$. Since $n\geq\,k+1\geq4$, it follows that $n\geq4$. If  $n=4$, then $k=n-1$, and if $n=5$, then $k=n-1$ or $n-2$. Therefore, if $n\geq\,k+1\geq4$, then $\left(\frac{k-2}{k-1}\right)n-(k-1)\geq0$ for $k\in\{n-1,n-2\}$ and   $\left(\frac{k-2}{k-1}\right)n-k\geq0$ for $k=n-3$. Accordingly, based on Theorems \ref{cth0}, \ref{th1}, and \ref{th2}, we can conclude that $\dim_l(G)\leq\left(\frac{k-2}{k-1}\right)n$ for $k\in\{n-1,n-2,n-3\}$ and $n\geq\,k+1\geq4$.
\end{proof}

We now turn our attention to the following:

\begin{problem} {\rm  \cite[Problem 1]{Abrishami1}} 
\label{con1}
If $G$ is a planar graph with $n(G) \geq 2$, is it then true that  
\[\dim_l(G) \leq \left\lceil\frac{n(G)+1}{2}\right\rceil\,?\]
\end{problem}

By Theorem~\ref{cth1}, Problem~\ref{con1} has a positive answer for triangle-free planar graphs, that is, for planar graphs $G$ with $\omega(G) = 2$. In the following we present an example which demonstrates that in general the problem has a negative answer.

If $\ell\ge 2$ is an integer, then let $G_\ell$ be the graph obtained from the disjoint union of $\ell$ complete graphs $K_3$, by adding a new vertex and make it adjacent to all vertices in the $\ell$ copies of $K_3$. Then $G_\ell$ is a planar graph. Since the vertices of every $K_3$ form a true twin equivalence class, at least two of them must lie in every local resolving set. Consequently, $\dim_\ell(G)\ge 2\ell$. In addition, a set consisting of two vertices from each of the $K_3$ is a local resolving set, so we have $\dim_\ell(G) = 2\ell$. We can conclude that Problem~\ref{con1}  has a negative answer in general.

Note that $\omega(G_\ell) = 4$. It remains open whether Problem~\ref{con1} has a positive answer for planar graphs $G$ with $\omega(G) = 3$. 

\section*{Acknowledgments}


The research of Ali Ghalavand and Xueliang Li was supported the by NSFC No.\ 12131013 and 12161141006. Sandi Klav\v{z}ar was supported by the Slovenian Research Agency (ARIS) under the grants P1-0297, N1-0355, and N1-0285.

\section*{Conflicts of interest} 

The authors declare no conflict of interest.

\section*{Data availability} 

No data was used in this investigation.

\end{document}